\newtheorem{theorem}{Theorem}
\newtheorem{lemma}{Lemma}
\newtheorem{corollary}[theorem]{Corollary}
\newtheorem{proposition}[theorem]{Proposition}
\newtheorem{problem}{Problem}
\newtheorem{question}[problem]{Question}
\theoremstyle{definition}
\theoremstyle{definition}
\newtheorem{remark}{Remark}
\theoremstyle{definition}
\newtheorem{notation}{Notation}
\newcommand{\field}[1]{\mathbb{#1}} \newcommand{\Q}{\field{Q}}
\newcommand{\R}{\field{R}}  \newcommand{\Z}{\field{Z}}
\newcommand{\C}{\field{C}}  
\newcommand{\SL}{{\rm SL}}
\newcommand{\Sp}{{\rm Sp}}
\newcommand{\rank}{{\operatorname{rank}}}
\newcommand{\D}{\Delta}  
\newcommand{\g}{\gamma} \newcommand{\G}{\Gamma}
\renewcommand{\l}{\lambda}  
\newcommand{\s}{\sigma}    
\newcommand{\fg}{\mathfrak   g}   
   \newcommand{\fh}{\mathfrak  h}
    \newcommand{\bs}{\backslash}
\newcommand{\ra}{\rightarrow}
\begin{document}
\title[Discrete Linear Groups  containing Arithmetic Groups]{Discrete Linear Groups containing Arithmetic Groups}
\author{Indira Chatterji and T. N. Venkataramana}
\email{indira.chatterji@math.cnrs.fr, tn.venkataramana@icts.res.in}
\subjclass{Primary  22E40,  Secondary  20G30}   
\address{Indira  Chatterji, Universit\'e C\^ote d'Azur, 06000 Nice, FRANCE. 
Partially  supported by  NSF grant  DMS 0644613. \vskip 5mm    
T.  N. Venkataramana, Department  of Mathematics, International Centre for Theoretical Sciences, Bangalore- 560 089, INDIA. } \date{} 

\begin{abstract}
If $H$ is a simple real algebraic subgroup of real rank at least two in a simple real algebraic group $G$, we prove, 
in a substantial number of cases, that a Zariski dense discrete subgroup of $G$ containing a lattice in $H$ is a lattice 
in $G$. For example, we show that any Zariski dense discrete subgroup of $\SL_n(\R)$ ($n\geq 4$) which contains $\SL_3(\Z)$ (in the top left hand corner) is commensurable with a conjugate of $\SL_n(\Z)$. \\ 

In contrast, when the groups $G$ and $H$ are of real rank one, there are lattices $\Delta$ in a real rank one group $H$ embedded in a larger real rank one group $G$ and that extends to a Zariski dense discrete subgroup $\Gamma $ of $G$ of {\bf infinite co-volume}.
\end{abstract}

\maketitle

\tableofcontents

\section{Introduction}
In this paper, we study special cases of the following problem raised by Madhav Nori:

\begin{problem}[Nori, 1983] If $H$ is a real algebraic subgroup of a real semi-simple algebraic 
group $G$, find sufficient conditions on $H$ and $G$ such that any Zariski dense discrete 
subgroup $\Gamma $ of $G$ which intersects $H$ in a lattice in $H$, is itself a lattice in $G$.
\end{problem}

Until recently, we did not know any example when the larger discrete subgroup $\G$ was not a lattice, but where the smaller group $H$ is a simple Lie group and has real rank strictly greater than one (then the 
larger group $G$ is also of real rank at least two). The recent work of Danciger, Gu\'eritaud and Kassel \cite{DGK} provides non-lattice discrete subgroups which contain smaller higher rank lattices. However, there do exist examples (as in the present paper) where the larger $\G$ is forced to be a lattice: the goal of the present paper (written in 2009) is to study the following question, related to Nori's problem. 

\begin{question}\label{nori} If a Zariski dense discrete subgroup $\Gamma$ of
a simple non-compact Lie group $G$ intersects a simple non-compact Lie
subgroup $H$- of real rank at least two-in a lattice, is the larger discrete group $\Gamma$ a
lattice in the larger Lie group $G$?
\end{question}

Analogous questions have been considered before: (e.g. \cite{F-K}, Bass-Lubotzky \cite{B-L}, Hee Oh \cite{O}, and \cite{V2}).\\ 

Madhav Nori  first raised the question (to the second named author of the present paper) whether these larger discrete groups containing higher rank lattices have to be lattices themselves (i.e. arithmetic groups, in view of Margulis' Arithmeticity Theorem). We will in fact show several examples of pairs of groups $(H,G)$, such
that $\Gamma $ does turn out to be a lattice, giving examples where the answer to Question \ref{nori} is \emph{positive} (see Theorem \ref{spllinear} and Corollary \ref{SLn}).  As we already mentioned, however,   the recent results of \cite{DGK} show that there are many examples where the answer is \emph{negative}. It is not clear to us, what a reasonable conjecture  on the necessary and sufficient condition on the pair $(G,H)$ should be, so that  Question \ref{nori} has an  affirmative answer. \\

We prove a general result on super-rigidity of certain discrete groups, from which we can extract examples of $(G,H)$ with the answer "yes" to Question \ref{nori}. Here are two consequences: consider the ``top left hand corner'' embedding
\[\SL_k< \SL_n, ~(k\geq 3).\] 
The embedding is as follows: an $\SL_k$ matrix $M$ is thought of as an $n\times n$ matrix $M'$ such
that the first $k\times k$ entries of $M'$ are the same as those of $M$, the last $(n-k)\times (n-k)$ entries of $M'$ are those of the identity $(n-k)\times (n-k)$ matrix, and all other entries of $M'$ are zero.

\begin{theorem}\label{spllinear} Suppose that $\SL_3$ is embedded in
$\SL_n$ (in the ``top left hand corner'') as above. Suppose that
$\Gamma$ is a Zariski dense discrete subgroup of $\SL_n(\R)$ whose
intersection with $\SL_3(\R)$ is a subgroup of $\SL_3(\Z)$ of finite
index. Then, $\Gamma $ is commensurate to a conjugate of $\SL_n(\Z)$,
and is hence a lattice in $\SL_n(\R)$.
\end{theorem}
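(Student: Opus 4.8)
The plan is to show that $\Gamma$ contains a subgroup of finite index in $SL_n(\Z)$. Once this is established the theorem follows quickly: $\Gamma$ is then a discrete subgroup of $SL_n(\R)$ containing a lattice, hence is itself a lattice and contains that lattice with finite index, so $\Gamma$ is commensurable with $SL_n(\Z)$ (no conjugation is needed, since the rational structure is already singled out by the presence of $SL_3(\Z)$). As a first, cost-free step I would invoke the congruence subgroup property of $SL_3(\Z)$ --- this is the first place the hypothesis $\mathrm{rank}(H)\ge 2$ enters --- to see that $\Delta:=\Gamma\cap SL_3(\R)$, being of finite index in $SL_3(\Z)$, contains a principal congruence subgroup of some level $q$; in particular $E_{ij}(q)\in\Gamma$ for all $1\le i\ne j\le 3$.

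Second, I would reduce everything to a single ``one step'' statement and iterate it. For $3\le k\le n-1$ let $V_k^{+}=\{I+v\,e_{k+1}^{T}:v\in\langle e_1,\dots,e_k\rangle\}\subset SL_n$ and let $V_k^{-}$ be its transpose; each is an abelian unipotent subgroup normalized by the corner copy of $SL_k$, which acts on it through the standard, respectively dual, representation, in particular \emph{irreducibly} and through an arithmetic group. Since the corner copies of $SL_k$, $V_k^{+}$ and $V_k^{-}$ together generate the corner copy of $SL_{k+1}$, it suffices to prove, by induction on $k$ starting from the given case $k=3$, that $\Gamma\cap SL_k(\R)$ contains a subgroup of finite index in the corner $SL_k(\Z)$. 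The inductive step reduces in turn to showing that $\Gamma$ meets $V_k^{+}$ and $V_k^{-}$ nontrivially: indeed, $\Gamma\cap V_k^{\pm}$ is then a discrete subgroup of $V_k^{\pm}\cong\R^{k}$ invariant under the Zariski dense arithmetic group $\Gamma\cap SL_k(\R)$ acting irreducibly, hence it spans $V_k^{\pm}$ and is a lattice commensurable with $V_k^{\pm}(\Z)$; consequently $\langle\,\Gamma\cap SL_k(\R),\ \Gamma\cap V_k^{+},\ \Gamma\cap V_k^{-}\,\rangle$ contains $E_{ij}(c)$ for some $c\ge 1$ and all $i\ne j$ with $i,j\le k+1$, and therefore (Bass--Milnor--Serre, using $k+1\ge 3$: the group generated by the $c$-congruence elementary matrices has finite index in $SL_{k+1}(\Z)$) contains a finite-index subgroup of the corner $SL_{k+1}(\Z)$. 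Moreover the transpose--inverse automorphism of $SL_n$ preserves all the hypotheses and interchanges $V_k^{+}$ with $V_k^{-}$, so it is enough to produce a nonzero element of $V_k^{+}$.

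Third --- and here is where the real difficulty lies, and where the hypothesis $\mathrm{rank}(H)\ge 2$ is genuinely needed (for $SL_2\subset SL_3$ the analogous statement is false, as the abstract records) --- one must manufacture a nonzero element of $V_k^{+}$ inside $\Gamma$. I would fix an element $a\in\Gamma\cap SL_k(\R)$ that is $\R$-regular in $SL_k$, so that its eigenvalues on $\langle e_1,\dots,e_k\rangle$ are units of pairwise distinct absolute value, while $a$ fixes $e_{k+1},\dots,e_n$; such $a$ exist because $\Gamma\cap SL_k(\R)$, being of finite index in $SL_k(\Z)$, contains $\R$-regular elements. Using the Zariski density of $\Gamma$ in $SL_n$, choose $\gamma\in\Gamma$ in general position with respect to the attracting and repelling data of $a$, and exploit the contraction dynamics of the sequence $a^{m}$ acting by conjugation --- on $\gamma$ and on its products and commutators with the elementary matrices $E_{ij}(q)\in\Gamma\cap SL_k(\R)$ --- to locate inside $\Gamma$ a nontrivial unipotent element lying in the unipotent radical of a parabolic subgroup of $SL_n$ attached to $a$, hence in some conjugate of $V_k^{+}$; the rich supply of elements in the rank-$\ge 2$ group $\Gamma\cap SL_k(\R)$, together with the torus centralizing $a$, is what then allows this unipotent to be conjugated into $V_k^{+}$ itself. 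The crux --- the step I expect to cost the most work --- is to ensure that the unipotent so produced actually lies in $\Gamma$, rather than merely in its Zariski closure or in the closure of a $\Gamma$-orbit: this is where discreteness of $\Gamma$ is combined with the integrality (congruence) structure carried by $\Gamma\cap SL_k(\R)$ to discretize the construction, and it forces a careful, coordinated choice of $a$, of $\gamma$, and of the auxiliary elementary matrices, precisely so as never to require a group element to land on a prescribed Zariski-closed set.

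Finally, with $\Gamma\cap V_k^{+}$ and $\Gamma\cap V_k^{-}$ both shown to be lattices, the bookkeeping of the second paragraph advances the induction from $k$ to $k+1$; running it from $k=3$ up to $k=n-1$ shows that $\Gamma\;(=\Gamma\cap SL_n(\R))$ contains a finite-index subgroup of $SL_n(\Z)$, and the first paragraph then finishes the proof.
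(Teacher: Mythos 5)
Your overall scaffolding (induct on the corner $SL_k$, generate $SL_{k+1}$ from $SL_k$ together with the column and row groups $V_k^{\pm}$, then invoke bounded generation/CSP-type results and finish by noting a discrete group containing a lattice is a lattice) is coherent, but the proof has a genuine gap exactly where you flag it: you never actually produce a nontrivial element of $\Gamma\cap V_k^{+}$. The contraction argument you sketch --- conjugating a generic $\gamma\in\Gamma$ (or commutators with elementary matrices) by high powers of an $\R$-regular element $a\in\Gamma\cap SL_k(\R)$ --- produces unipotent elements only as \emph{limits} of the sequence $a^{m}\gamma a^{-m}$, and limits of elements of a discrete group need not belong to the group unless the sequence is eventually constant; discreteness plus the congruence structure on $\Gamma\cap SL_k(\Z)$ gives no mechanism for forcing that. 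Since a priori $\Gamma$ could fail to meet $V_k^{\pm}$ at all (ruling this out is essentially equivalent to the theorem), the ``crux'' you defer is not a technical refinement but the entire content of the inductive step, and the proposal as written does not prove it. (The remaining bookkeeping also needs care --- e.g.\ $\Gamma\cap V_k^{+}$ is a priori only commensurable with a \emph{scalar multiple} of $V_k^{+}(\Z)$, so the elementary matrices you obtain have real, not integral, parameters --- but these are normalization issues, not the main obstruction.)

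The paper circumvents precisely this difficulty by a different mechanism: it never hunts for new unipotents of $SL_{k+1}$ inside $\Gamma$. Instead it takes a unipotent $u$ already present in $\Gamma$ (a root element of $SL_3(\Z)$), conjugates it by a generic $\gamma\in\Gamma$, and shows (Lemma \ref{zariski}) that $SL_k$ together with $\gamma u\gamma^{-1}$ generates a group whose Zariski closure, modulo its radical $\R^{k+1}$, is $SL_{k+1}$. The group $\Delta(\gamma)\subset\Gamma$ so generated is then shown to be super-rigid (via Theorem \ref{ranktwo}), hence by the arithmeticity theorem \ref{arithmetic} (from \cite{V3}, valid without assuming $\Delta(\gamma)$ is a lattice or finitely generated) its projection to $SL_{k+1}(\R)$ lies in an arithmetic group and is therefore \emph{discrete}; Corollary \ref{SLn} identifies the projection with a conjugate of $SL_{k+1}(\Z)$ up to finite index, and Raghunathan's vanishing of $H^1(\cdot,\R^{k+1})$ removes the radical (Lemma \ref{topleft}). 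The discreteness of that projection is exactly the kind of conclusion your dynamical sketch cannot deliver; if you wish to salvage your route, you would need an independent argument producing integral unipotents in $\Gamma$, and no elementary contraction argument known to work for arbitrary Zariski dense discrete $\Gamma$ does this without the super-rigidity input.
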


Here we recall that two groups are called \emph{commensurate} when their intersection has finite index in each of them. We may similarly embed $\Sp_k\subseteq \Sp_g$ in the ``top left hand corner'', where  $\Sp_g$ is  the symplectic group  of $2g\times 2g$-matrices  preserving the non-degenerate symplectic form $J_g={\rm diag}(J_2,\dots,J_2)$ in
$2g$-variables, where $J_2=\left(\begin{array}{cc}0&1 \\ 1&0\end{array}\right)$. Denote by $\Sp_g(\Z)$ the integral symplectic group.

\begin{theorem}\label{symplectic} If $\Gamma $ is a Zariski dense
discrete subgroup of $\Sp_g(\R)$ whose intersection with $\Sp_2(\R)<Sp_g(\R)$ (in the ``top left hand corner'') is
commensurate to  $\Sp_2(\Z)$ then a conjugate of  $\Gamma $ is
commensurate with $\Sp_g(\Z)$ and hence $\Gamma $ is a lattice in
$\Sp_g(\R)$.
\end{theorem}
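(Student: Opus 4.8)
\smallskip
\noindent\emph{Sketch of the strategy.} The plan is to reduce the theorem to the single assertion that $\Gamma$ contains a finite-index subgroup $\Lambda$ of some conjugate of $Sp_g(\Z)$. Indeed, such a $\Lambda$ is a lattice in $Sp_g(\R)$, so a discrete group $\Gamma$ containing it is again a lattice and contains it with finite index; hence $\Gamma$ is commensurable with a conjugate of $Sp_g(\Z)$, as claimed. So from now on the goal is to produce such a $\Lambda$ inside $\Gamma$.

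Realise $Sp_g$ on $\R^{2g}$ with a symplectic basis $e_1,\dots,e_g,f_1,\dots,f_g$, the top left corner $Sp_2$ being the symplectic group of $V_0:=\langle e_1,e_2,f_1,f_2\rangle$, acting trivially on its symplectic complement. Set $\Delta:=\Gamma\cap Sp_2(\R)$, which is commensurable with $Sp_2(\Z)$. Since $Sp_2$ is split of real rank two, $\Delta$ meets each root subgroup $U_\alpha$ of $Sp_2$ (for $\alpha$ in the $C_2$ system carried by $V_0$) in a finite-index subgroup of $U_\alpha(\Z)$; concretely, $\Gamma$ contains the symplectic transvection $t_v\colon x\mapsto x+\langle x,v\rangle v$ together with all its integral powers, for every $v$ in the rank-four lattice $L_0:=\Z e_1+\Z e_2+\Z f_1+\Z f_2$. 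Thus at the outset $\Gamma$ contains a copious supply of long-root elements of $Sp_g$, but only those supported on the four-dimensional space $V_0$.

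The heart of the proof is an \emph{enlargement step}, which I would run as an induction passing from the ``$Sp_k$-corner'' to the ``$Sp_{k+1}$-corner'' for $2\le k<g$. Assume $\Gamma$ contains a finite-index subgroup of the copy of $Sp_k(\Z)$ on $\langle e_1,\dots,e_k,f_1,\dots,f_k\rangle$, equivalently the transvections along all vectors of the associated rank-$2k$ lattice. Using that $\Gamma$ is Zariski dense in $Sp_g(\R)$ — and that $Sp_g$ is transitive on nonzero vectors, on symplectic planes, and so on — choose $\gamma\in\Gamma$ placing this corner in sufficiently general position with respect to the coordinate subspaces, so that $\gamma$ ``rotates'' part of the corner into the $(k+1)$st slot. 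Conjugating by $\gamma$ the transvections we already possess, then re-adjusting by the ambient $Sp_k(\Z)$, and using the transvection commutator identities (the commutator $[t_v,t_w]$ is a product of transvections in the span of $v$ and $w$), the aim is to manufacture the integral transvections along $e_{k+1}$, along $f_{k+1}$, and along their integral combinations with $e_1,\dots,f_k$: these are the integral points of a ``large'' horospherical (unipotent) subgroup of the $(k+1)$-corner $Sp_{k+1}$, together with those of the opposite one. The group generated at this stage is Zariski dense in $Sp_{k+1}$, discrete, and contains such integral horospherical subgroups; the arithmeticity criterion for Zariski-dense discrete groups carrying a sufficiently large integral unipotent subgroup (in the spirit of \cite{V2}, and of the earlier work \cite{B-L}, \cite{O}; available because the ambient rank is at least two) then forces it to be an arithmetic lattice, i.e.\ to contain a finite-index subgroup of $Sp_{k+1}(\Z)$ up to conjugation. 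Iterating from $k=2$ to $k=g-1$ yields the required $\Lambda$ inside $\Gamma$, and the first paragraph concludes.

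The step I expect to be the main obstacle is this enlargement, where the difficulty is genuine. A Zariski-dense discrete subgroup containing merely a pair of ``opposite'' one-parameter unipotent subgroups need not be a lattice — this fails already in $SL_2(\R)$ — so one cannot hope to get by with a single generic conjugate of a single unipotent element. One must really use the full rank-two corner, i.e.\ transvections along a whole lattice of directions, and choose the conjugating element $\gamma$ with exactly the right genericity, so that the tension between Zariski density (which demands generic positions) and discreteness (which constrains those positions to be ``arithmetic'') can be resolved only by $\Gamma$ being arithmetic; pinning this down is precisely where the hypothesis that $H=Sp_2$ has real rank at least two enters. One must also keep track of the $\Q$-structure on $Sp_g$, which is determined up to conjugacy by the split corner $Sp_2(\Z)$ — this is the source of the word ``conjugate'' in the statement. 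The whole argument runs parallel to the proof of Theorem~\ref{spllinear}, with the type-$A$ bookkeeping there replaced by type-$C$ bookkeeping here.
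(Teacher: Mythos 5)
Your induction from the $Sp_k$-corner to the $Sp_{k+1}$-corner mirrors the paper's overall plan (the paper explicitly deduces Theorem~\ref{symplectic} by running the proof of Theorem~\ref{spllinear} with Corollary~\ref{Spn} in place of Corollary~\ref{SLn}), but your enlargement step diverges from the paper's and contains the gap that you yourself flag. Conjugating an integral transvection $t_v$ by a generic $\gamma\in\Gamma$ gives $t_{\gamma v}$ with $\gamma v$ a generic \emph{real} vector, not a lattice vector in the $e_{k+1},f_{k+1}$ directions, and the transvection commutator identities only land you in the real span of the vectors involved --- they do not rationalize coefficients. So nothing in this construction produces lattices in horospherical subgroups of the $Sp_{k+1}$-corner, and the arithmeticity criteria you invoke (\cite{O}, \cite{V2}, \cite{B-L}, all of which need exactly such lattices as input) never receive their hypotheses. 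The tension between genericity and integrality that you identify is therefore not resolved by this route.

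The paper resolves it without ever manufacturing integral unipotents in the new direction: it uses its own super-rigidity theorem instead. The type-$C$ analogue of Lemma~\ref{zariski} is a pure \emph{Zariski-density} statement --- adjoining a \emph{single} generic conjugate $\gamma u\gamma^{-1}$ of one unipotent $u$ to the $Sp_k$-corner gives a group whose Zariski closure, modulo its unipotent radical, is all of $Sp_{k+1}$; no integrality in the new slot is claimed. The analogue of Lemma~\ref{topleft} then observes that this intermediate discrete group $\Delta$ still contains the higher-rank lattice $Sp_k(\Z)$, so it is super-rigid in its Zariski closure; by Theorem~\ref{arithmetic}, the classification of $\Q$-forms and \cite{V1} (this is exactly Corollary~\ref{Spn}), the $Sp_{k+1}$-projection of $\Delta$ is virtually a conjugate of $Sp_{k+1}(\Z)$, and Raghunathan's vanishing of $H^1$ \cite{R2} splits off the abelian unipotent radical so that $\Delta$ \emph{itself} virtually contains a conjugate of $Sp_{k+1}(\Z)$. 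Your sketch is missing both the super-rigidity input --- the entire content of Theorems~\ref{noncompact}, \ref{ranktwo} and Corollary~\ref{Spn} --- and the radical-splitting step.
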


It seems likely that $Sp_2 < Sp_g$ may be replaced by $SO(2,3) < SO(p,q)$ with $p \geq 2 \quad and \quad q \geq 3$, but we have not verified the details. \\

In the above examples, the group $H$ has very small dimension when compared to that of the ambient group $G$; nevertheless, these furnish examples where Question \ref{nori} has a positive answer. This is the reason we have singled them out. \\

The proof of Theorem \ref{spllinear} depends (as does the proof of Theorem \ref{symplectic}) on a general super-rigidity theorem for discrete subgroups $\Gamma$ which contain a ``large'' enough higher
rank lattice. More precisely, our main result is the following. \\ 

\begin{notation}\label{notation:Lie} Let $P$ be a minimal real parabolic subgroup of the simple Lie group $G$ and denote by $N$ the unipotent in a maximal real split torus of $P$.  Let $A$ be the connected component of identity in $S$. Denote by $P_0$ the subgroup $AN$ of $P$ (if $G$ is split, then $P_0$ is the identity component of $P$; in general we have replaced the minimal parabolic subgroup $P$ by a subgroup $P_0$ which has no compact factors such that $P/P_0$ is compact). Let $K$ be a maximal compact subgroup of $G$.  We have the Iwasawa decomposition $G=KP_0=KAN$. 
\end{notation}

\begin{theorem}[Main result]\label{theo:main} Let $H$ be a semi-simple subgroup of a semi-simple Lie group $G$ with $\R-\rank(H)\geq 2$ such that the normal subgroup of $G$ generated by $H$ is all of $G$. Let
$\Gamma $ be a Zariski dense subgroup of $G$ whose intersection with
$H$ is an irreducible lattice in $H$. Let $G=KAN$ be the Iwasawa
decomposition of $G$ and $P_0=AN$. If the isotropy of $H$ acting on $G/P_0$ is positive dimensional and non-compact at every point of $G/P_0$, then $\Gamma $ is a super-rigid subgroup of $G$.
\end{theorem}

An earlier version of Theorem \ref{theo:main} assumed $G$ to be simple. It was pointed out to us by Yehuda Shalom that the proof works for $G$ semi-simple as well. \\

Here we recall that a lattice $\Delta$ in a semisimple Lie group $H$ is \emph{irreducible} if for every non-central normal subgroup $N\lhd H$, $\Delta$ is dense when projected onto $H/N$.  The conditions of Theorem \ref{theo:main} are satisfied if the dimension of $H$ is sufficiently large (for example, if $\dim (K)<\dim (H)$). We then get the following as a corollary.

\begin{theorem} \label{ranktwo} Let $\Gamma $ be a Zariski dense discrete subgroup of a simple Lie group $G$ which intersects a semi-simple subgroup $H$ of $G$ (with $\R-\rank(H)\geq 2$) in an irreducible lattice. Let $K$ be a maximal compact subgroup of $G$ and assume that $\dim (H)> \dim (K)$. Then $\Gamma$ is a super-rigid subgroup of $G$.
\end{theorem}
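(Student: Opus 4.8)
The plan is to obtain Theorem \ref{ranktwo} as a direct corollary of Theorem \ref{noncompact}: all the hypotheses on $G$, $H$ and $\Gamma$ are literally the same, so the only thing to verify is that the numerical condition $\dim(H)>\dim(K)$ forces the isotropy of $H$ on $G/P_0$ to be positive-dimensional (and, implicitly, non-compact) at every point. This is a short argument combining a dimension count with the solvability of $P_0$.

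First I would pin down the dimension of the flag-type space $G/P_0$. From the Iwasawa decomposition $G=KAN=KP_0$ one gets a smooth map $K\to G/P_0$, $k\mapsto kP_0$. It is surjective because $G=KP_0$, and injective because $K\cap P_0=K\cap AN$ is trivial: the group $AN$ is simply connected and solvable, hence contains no nontrivial compact subgroup, in particular meets the compact group $K$ only in the identity. Thus $k\mapsto kP_0$ is a diffeomorphism and $\dim(G/P_0)=\dim(K)$.

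Now fix a point $x=gP_0\in G/P_0$ and let $H_x=H\cap gP_0g^{-1}$ be its isotropy group in $H$. By orbit--stabilizer for the smooth $H$-action on the manifold $G/P_0$, $\dim(H_x)=\dim(H)-\dim(H\cdot x)\geq \dim(H)-\dim(G/P_0)=\dim(H)-\dim(K)>0$, so every isotropy group is positive-dimensional. Moreover $H_x\subseteq gP_0g^{-1}=g(AN)g^{-1}$, again a simply connected solvable group, which therefore has no nontrivial compact subgroup; being positive-dimensional, $H_x$ cannot be compact. Hence the isotropy of $H$ acting on $G/P_0$ is positive-dimensional and non-compact at every point, the hypotheses of Theorem \ref{noncompact} are met, and that theorem gives at once that $\Gamma$ is super-rigid in $G$.

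Since this is a reduction rather than a new argument, there is essentially no serious obstacle; the only points requiring a little care are the identification $\dim(G/P_0)=\dim(K)$ (as opposed to $\dim(G/P)$, which differs by the dimension of the compact factor $M$, here zero since $P/P_0$ is compact and $P_0=AN$) and the observation that positive-dimensionality of $H_x$ already yields non-compactness because $P_0$ is solvable. If one prefers to bypass Theorem \ref{noncompact}, the same estimate shows more generally that $\dim(H)>\dim(K)$ is exactly the clean sufficient condition referred to in the remark following that theorem.
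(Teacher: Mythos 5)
Your proposal is correct and takes essentially the same route as the paper: the authors derive Theorem \ref{ranktwo} from Theorem \ref{noncompact} via Lemma \ref{trick}, whose proof is precisely your dimension count (using $\dim(G/P_0)=\dim(K)$ from the Iwasawa decomposition) together with the observation that $P_0=AN$ has no nontrivial compact subgroups.
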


Here, \emph{super-rigid} is in the sense of Margulis \cite{M}. That is, all linear representations - satisfying some mild conditions - of the group $\Gamma$ virtually extend to (i.e. coincide on a finite index
subgroup of $\Gamma$ with) a linear representation of the ambient group $G$.\\ 

For instance, as a consequence of Theorem \ref{ranktwo} we obtain the following.

\begin{corollary} \label{SLn} If $n\geq 4$ and $\Gamma$ is a Zariski dense discrete subgroup of $\SL_n(\R)$
which intersects any $\SL_{n-1}(\R)<SL_{n}(\R)$ in a finite index subgroup of $\SL_{n-1}(\Z)$, then a conjugate of $\Gamma$ in $\SL_n(\R)$ is commensurate to $\SL_n(\Z)$. \\

In the above we may replace $n-1$ by any integer $k>\frac{n}{{\sqrt 2}}$ {\rm (}and $k\leq n-1${\rm )}. 
\end{corollary}

Analogously, we prove

\begin{corollary} \label{Spn} If $g\geq 3$, every Zariski  dense discrete
subgroup of $\Sp_g(\R)$ which contains a finite index subgroup of
$\Sp_{g-1}(\Z)$ is commensurable to a conjugate of $\Sp_g(\Z)$.
\end{corollary}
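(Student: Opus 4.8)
First I would deduce the statement from Theorem~\ref{ranktwo}. Put $H=Sp_{g-1}(\R)$ and $G=Sp_g(\R)$, with $H$ the standard corner subgroup. Since $g\geq 3$, the real rank of $H$ equals $g-1\geq 2$; as $H$ is simple, every lattice in $H$ is irreducible; and $\Gamma\cap H$, being discrete and containing a finite index subgroup $\Delta$ of $Sp_{g-1}(\Z)$, is itself a lattice in $H$, commensurable with $Sp_{g-1}(\Z)$. A maximal compact subgroup of $G$ is $U(g)$, of dimension $g^2$, whereas $\dim H=(g-1)(2g-1)=2g^2-3g+1$; the inequality $2g^2-3g+1>g^2$, i.e. $g^2-3g+1>0$, holds for every $g\geq 3$ (for $g=3$ it reads $10>9$). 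Hence the hypotheses of Theorem~\ref{ranktwo} are satisfied, and any such $\Gamma$ is a super-rigid subgroup of $Sp_g(\R)$.

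Next I would upgrade super-rigidity to arithmeticity, along the lines of the linear case (Corollary~\ref{SLn} and Theorem~\ref{spllinear}). As $\Gamma$ is Zariski dense and $Sp_g$ is connected, the subgroup $\Gamma_0$ generated by $\Delta$ and finitely many suitably chosen elements of $\Gamma$ is finitely generated, Zariski dense, super-rigid, and still contains $\Delta$; and it suffices to prove that $\Gamma_0$ is commensurable with a conjugate of $Sp_g(\Z)$, since then $\Gamma\supseteq\Gamma_0$ is discrete and contains a lattice, hence is itself a lattice commensurable with the same conjugate. Now run Margulis' arithmeticity mechanism, with the embedded arithmetic group $\Delta$ playing the role normally played by the finite co-volume hypothesis: super-rigidity over the various completions forces every embedding of $\Gamma_0$ into $Sp_g$ over a $p$-adic field to have relatively compact image and constrains the archimedean Galois conjugates of $\Gamma_0$, and since $\Delta$ is Zariski dense in the corner $Sp_{g-1}$ and arithmetic there, one obtains a $\Q$-structure on $Sp_g$ for which, after conjugation, $\Gamma_0\subseteq Sp_g(\Z[1/N])$ for some integer $N$, with $\Delta$ sitting inside the standard integral corner $Sp_{g-1}(\Z)$ with finite index. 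This $\Q$-form must be the split one: type $C_g$ has no outer automorphisms, so the form is the special unitary group of a skew-Hermitian form over a quaternion algebra $D$ over $\Q$, its rank-$(g-1)$ corner is an inner form of $Sp_{g-1}$ over the same $D$, and since that corner lies in the split commensurability class of $Sp_{g-1}(\Z)$, the algebra $D$ must be split; therefore the $\Q$-form of $G$ is the split group $Sp_g/\Q$.

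It then remains to show that $\Gamma_0$ contains a finite index subgroup of $Sp_g(\Z)$. Apart from $\Delta$, the group $\Gamma_0$ contains a $\Q$-rational element that does not normalize the parabolic stabilizing the corner; conjugating the root unipotents of $\Delta$ by it produces $\Q$-rational one-parameter unipotent subgroups of $Sp_g$ lying in root groups outside $Sp_{g-1}$, and clearing denominators yields a non-trivial \emph{integral} root unipotent of $Sp_g$ inside $\Gamma_0$. One then invokes the generation statement for the symplectic group --- that $Sp_{g-1}(\Z)$ together with one such integral root unipotent generates a finite index subgroup of $Sp_g(\Z)$, which rests on the congruence subgroup property for $Sp_g$ ($g\geq 2$) and on the generation of congruence subgroups by elementary (root) unipotents --- to conclude that $\Gamma_0$, and hence $\Gamma$, is commensurable with a conjugate of $Sp_g(\Z)$.

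The numerology of the first step and the final lattice-squeezing argument are routine; the crux is the middle step. Theorem~\ref{ranktwo} furnishes only super-rigidity, not the lattice property, so Margulis' passage from super-rigidity to an integral $\Q$-structure has to be re-run with the embedded arithmetic subgroup $\Delta\subset\Gamma$ substituting for finite co-volume, and one must moreover guarantee that sufficiently many $\Q$-rational --- indeed integral --- unipotents genuinely lie in $\Gamma$ for the symplectic generation lemma to apply. This propagation of integrality from $\Delta$ to all of $\Gamma$ through super-rigidity --- transcribed from type $A$ (Theorem~\ref{spllinear}) to type $C$ --- is where the real work lies.
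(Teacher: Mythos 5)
Your outline is essentially correct and follows the same skeleton the paper uses for Corollary~\ref{SLn} (the paper proves Corollary~\ref{Spn} ``in an analogous way''): verify the dimension count so Theorem~\ref{ranktwo} applies, pass from super-rigidity to containment in an arithmetic group, pin down the $\Q$-form as split via the corner, and then force finite index. The numerology $(g-1)(2g-1)>g^2 \iff g\geq 3$ and the irreducibility observation are exactly right. Where you diverge is in the middle and at the end. For the middle step you re-run the Margulis arithmeticity machinery from scratch (detouring through a finitely generated $\Gamma_0\subset\Gamma$); the paper instead invokes Theorem~\ref{arithmetic} (from~\cite{V3}), which already asserts that a super-rigid discrete subgroup of an absolutely simple real group is \emph{contained} in an arithmetic group, with no finite-generation or finite-covolume hypothesis --- so the detour through $\Gamma_0$ is unnecessary. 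Your identification of the $\Q$-form (a quaternion division algebra $D$ would propagate to the corner $Sp_{g-1}$-block and contradict the split integral structure there) is valid and is a fine substitute for the paper's route, which bounds the $\Q$-rank from below by $g-1$ and quotes the classification of $\Q$-forms of $Sp_g$ as in Lemma~\ref{classification}. The genuine soft spot is the final step: the claim that conjugating a root unipotent of $\Delta$ by a generic $\Q$-rational $\gamma$ and ``clearing denominators'' yields an \emph{integral root} unipotent is not justified as written --- such a conjugate is a unipotent element of $Sp_g(\Z[1/N])$ lying in a conjugate root group, not a standard one, and powering does not place it in a standard root group. In fact no new unipotent is needed: with the standard corner embedding, $Sp_{g-1}$ already contains the highest and second-highest root groups of $Sp_g$, so once $\Gamma$ is known to sit in (a conjugate of) $Sp_g(\Z)$ and to be Zariski dense, Theorem~(3.5)/Corollary~(3.6) of~\cite{V1} directly gives finite index, exactly as in the proof of Corollary~\ref{SLn}. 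Replacing your congruence-subgroup/root-unipotent argument by this citation closes the gap and matches the paper.
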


The proof of Theorem \ref{theo:main} runs as follows.  \\ 

We adapt Margulis' proof of super-rigidity to our situation; the proof of Margulis uses crucially that a lattice 
$\Gamma $ acts ergodically on $G/S$ for any non-compact subgroup $S$ of $G$, whereas we do not
have the ergodicity available to us. We use instead the fact that the representation of the discrete group 
$\Gamma $ is rational on the smaller group $\Delta$.  \\ 

Given a representation $\rho $ of the group $\Gamma $ on a vector space over a local field $k'$, we use a construction of Furstenberg to obtain a $\Gamma $-equivariant measurable map $\phi $ from $G/P_0$
into the space ${\mathcal  P}$ of probability measures on the projective space of the vector space. Using the fact that the isotropy
subgroup of $H$ at any point in $G/P_0$ is non-compact, we deduce that on $H$-orbits, the map $\phi$ is rational, and by pasting together the rationality of $\phi$ on many such orbits, we deduce the rationality
of the representation $\rho$.  \\ 

Theorem \ref{ranktwo} implies Corollary \ref{SLn} as follows: the
super-rigidity of $\Gamma $ in Corollary \ref{SLn} implies, as in
\cite{M}, that $\Gamma $ is a {\it subgroup} of an arithmetic subgroup
$\Gamma _0$ of $\SL_n(\R)$. It follows that the $\Q$-form of $\SL_n(\R)$
associated to this arithmetic group has $\Q$-rank greater than
$n/2$. The classification of the $\Q$-forms of $\SL_n(\R)$ then implies
that the $\Q$-form must be  $\SL_n(\Q)$ and that $\Gamma $ is, up to conjuguation,
commensurate to a subgroup of $\SL_n(\Z)$. Since $\Gamma $ is Zariski
dense and virtually contains $\SL_{n-1}(\Z)$, it follows from
\cite{V1} Corollary (3.8) that $\Gamma $ is commensurate to $\SL_n(\Z)$. \\ 

We now give a (not exhaustive) list of pairs $(H,G)$ which
satisfy the condition ($\dim (H)>\dim(K)$) of Theorem \ref{ranktwo}.
\begin{corollary} If $(H,G)$ is one of the pairs
\begin{enumerate}
    \item $H=\Sp_g< \SL_{2g}$ with $g\geq 2$, 
\item $H=\SL_p\times \SL_p< G=\SL_{2p}$ with $p\geq 3$
\item $H=\Sp_a\times \Sp_a< G=\Sp_{2a}$ with $1\leq a$ 
\end{enumerate}
then any Zariski dense discrete subgroup $\Gamma $ of $G(\R)$ whose
intersection with $H(\R)$ is an irreducible lattice, is super-rigid in
$G(\R)$.
\end{corollary}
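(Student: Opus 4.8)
The plan is to derive this from Theorem \ref{ranktwo}, so the entire task reduces to checking, for each of the three pairs $(H,G)$, that $H$ is semisimple with $\R$-rank at least two, that the lattice hypothesis makes sense (i.e. $H$ admits irreducible lattices), and above all that $\dim(H) > \dim(K)$ where $K$ is a maximal compact subgroup of $G$. Since Zariski density of $\Gamma$ and the intersection with $H$ being an irreducible lattice are hypotheses we are handed, and semisimplicity of each $H$ is clear, the real content is the dimension count. So first I would record, for each $G$ in the list, the dimension of its maximal compact subgroup: for $G=SL_m(\R)$ we have $K=SO(m)$ with $\dim K = \binom{m}{2}$, and for $G=Sp_m(\R)$ we have $K=U(m)$ with $\dim K = m^2$.

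Next I would carry out the three comparisons. For (1), $H=Sp_g$ has $\dim H = g(2g+1) = 2g^2+g$, while $G = SL_{2g}$ has $K=SO(2g)$ with $\dim K = \binom{2g}{2} = g(2g-1) = 2g^2 - g$; hence $\dim H - \dim K = 2g > 0$ for all $g \geq 1$, and in particular for $g \geq 2$ we also have $\R$-rank$(Sp_g) = g \geq 2$. For (2), $H = SL_p \times SL_p$ has $\dim H = 2(p^2-1)$, while $G = SL_{2p}$ has $K = SO(2p)$ with $\dim K = \binom{2p}{2} = p(2p-1) = 2p^2 - p$; so $\dim H - \dim K = 2p^2 - 2 - 2p^2 + p = p - 2 > 0$ exactly when $p \geq 3$, which matches the stated hypothesis, and the $\R$-rank of $SL_p\times SL_p$ is $2(p-1) \geq 2$ (indeed $\geq 4$). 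For (3), $H = Sp_a \times Sp_a$ has $\dim H = 2a(2a+1) = 4a^2 + 2a$, while $G = Sp_{2a}$ has $K = U(2a)$ with $\dim K = (2a)^2 = 4a^2$; hence $\dim H - \dim K = 2a > 0$ for all $a \geq 1$, and $\R$-rank$(Sp_a\times Sp_a) = 2a \geq 2$.

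In all three cases the hypotheses of Theorem \ref{ranktwo} are met: $H$ is semisimple with $\R$-rank at least two, $\Gamma$ is Zariski dense in $G(\R)$, $\Gamma \cap H(\R)$ is an irreducible lattice in $H(\R)$, and $\dim(H) > \dim(K)$. Applying Theorem \ref{ranktwo} directly yields that $\Gamma$ is super-rigid in $G(\R)$, which is exactly the assertion.

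I do not expect any genuine obstacle here — the only thing to be careful about is the bookkeeping of real dimensions (using real forms consistently: $SL_m(\R)$, $Sp_m(\R)$, their compact $SO(m)$, $U(m)$) and making sure the range of the parameter in each case is precisely the range for which the dimension inequality holds, which is why (2) carries the restriction $p \geq 3$ while (1) and (3) hold in the full stated range. One should also note in passing that each $H$ in the list does admit irreducible lattices (e.g. arising from $\Q$-forms), so the hypothesis on $\Gamma \cap H(\R)$ is not vacuous, though strictly this is part of the hypothesis rather than something to be proved.
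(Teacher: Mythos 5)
Your proposal is correct and follows exactly the approach the paper intends: the paper itself introduces the corollary by saying these are pairs "which satisfy the condition ($\dim(H) > \dim(K)$) of Theorem \ref{ranktwo}" and leaves the dimension bookkeeping to the reader, which is precisely what you have carried out, and your computations ($2g$, $p-2$, $2a$ respectively for the differences) and rank checks are all accurate.
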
 

There are examples of pairs $(H,G)$ satisfying the condition of
Theorem  \ref{theo:main}  which  are  not covered  by Corollary
\ref{ranktwo}. In the cases of the following corollary, it is easy to
check that $\dim (H)=\dim(K)$ and that the isotropy of $H$ at any
generic point of $G/P$ is a non-compact Cartan subgroup of $H$. When
$G=H(\C)$, we view $G$ as the group of {\bf real} points of a complex
algebraic group, and Zariski density of a subgroup $\Gamma \subseteq G$
is taken to mean that $\Gamma $ is Zariski dense in $G(\C)=H(\C)\times
H(\C)$.
\begin{corollary} Let $H$ be a real simple algebraic group defined over $\R$ with 
$\R-\rank(H)\geq 2$ embedded in the complex group $G=H(\C)$. If $H$ has no
compact Cartan subgroup, then every Zariski dense discrete subgroup
$\Gamma $ of $G$ which intersects $H$ in a lattice, is super-rigid in
$G$.
\end{corollary}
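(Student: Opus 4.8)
The plan is to deduce this Corollary from Theorem \ref{noncompact} by verifying its hypothesis: that the isotropy subgroup of $H$ acting on $G/P_0$ is positive-dimensional and non-compact at every point. Here $G=H(\C)$, regarded as the real points of a complex group, so a minimal real parabolic $P$ of $G$ is obtained by complexifying a minimal parabolic of $H$; more precisely, if $B$ is a Borel subgroup of the complex group $H(\C)$ then $G/P_0$ is the complex flag variety $H(\C)/B$, on which $H$ acts by the restriction of the $H(\C)$-action. So I must show: for every complex Borel subgroup $B$ of $H(\C)$, the intersection $H\cap B$ is a positive-dimensional non-compact real subgroup of $H$.

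First I would reduce to a statement about Borel subgroups containing a fixed maximal torus. Every Borel $B$ of $H(\C)$ contains some maximal $\C$-torus $T$ of $H(\C)$; since $H$ is simple of real rank $\geq 2$ and (by hypothesis) has no compact Cartan subgroup, every Cartan subgroup of $H$ is non-compact, hence contains a non-trivial split part. Choosing $T$ to be the complexification of a Cartan subgroup $\mathfrak{c}$ of $H$ whose split part $S_0$ is non-trivial (such a $\mathfrak{c}$ exists, e.g. a maximally split one), we get $S_0 \subset H \cap T \subset H \cap B$, which already shows $H\cap B$ is positive-dimensional and non-compact for those Borels $B\supset T$. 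Since $H$ acts transitively on the pairs (maximal compact-stable torus) and, more to the point, since any two points of $H(\C)/B$ differ by an element of $H(\C)$ but I need an element of $H$, I would instead argue pointwise: given an arbitrary point $x=gB\in H(\C)/B$, its $H$-isotropy is $H\cap gBg^{-1}$, the real points of $H(\C)\cap gBg^{-1}$; the latter is a Borel of $H(\C)$ defined over $\R$ only if it is the complexification of a real parabolic, which need not hold. The correct statement I need is milder: $H\cap gBg^{-1}$ is \emph{positive-dimensional and non-compact}, not that it is a Borel over $\R$. To get this, note that $\dim_\R(H\cap gBg^{-1}) = \dim_\C(H(\C)) - \dim_\R(H\cdot x)\geq \dim_\C H(\C) - \dim_\R(H(\C)/B) = \dim_\C B$ whenever the orbit $H\cdot x$ is not open; and by a dimension count $\dim_\R H = \dim_\C H(\C) < 2\dim_\C H(\C) - \dim_\C B = \dim_\R(H(\C)/B) + \dim_\C B$ forces $\dim_\R(H\cdot x) < \dim_\R(H(\C)/B)$ for \emph{every} $x$, so no orbit is open, and the isotropy is everywhere of real dimension $\geq \dim_\C B > 0$. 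This handles positive-dimensionality uniformly.

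The remaining and genuinely delicate point — the one I expect to be the main obstacle — is ruling out \emph{compact} isotropy: I must show $H\cap gBg^{-1}$ is non-compact for every $g$. A compact subgroup of $H$ is contained in a maximal compact $K_H$, while $gBg^{-1}$ is a complex solvable group; the real points of a complex solvable connected group, if compact, must be a torus sitting inside a maximal torus of $H(\C)$. So compactness of $H\cap gBg^{-1}$ would force this group to be (contained in) a compact Cartan subgroup of $H$, contradicting the standing hypothesis that $H$ has \emph{no} compact Cartan subgroup — provided I can show the isotropy always contains a Cartan subgroup of $H$, or at least a subgroup whose compactness would propagate. Concretely: the identity component of $H\cap gBg^{-1}$ is the real points of a connected subgroup of the solvable group $gBg^{-1}$, and its reductive part is a subtorus; if this whole isotropy were compact, then (being Zariski-connected solvable with compact real points) it would be a compact torus of $H$, which extends to a compact Cartan subgroup of $H$ — impossible. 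Thus I would structure the proof as: (i) $H/P_0$ identification with $H(\C)/B$; (ii) the dimension count above giving positive-dimensional isotropy everywhere; (iii) the structure-of-solvable-groups argument showing a compact isotropy would yield a compact Cartan subgroup; (iv) invoke the no-compact-Cartan hypothesis and $\R\text{-rank}(H)\geq 2$, then apply Theorem \ref{noncompact}. The subtlety in step (iii) — that the real points of the isotropy, which is only the real locus of a complex solvable group and need not itself be defined over $\R$ as a whole, nonetheless have a reductive part that embeds in a Cartan subgroup of $H$ — is where the careful bookkeeping lies, and I would expect to need the classification of maximal tori in $H(\C)$ stable under a given compact form, together with the observation that $H$ being split or quasi-split over $\C$ means every Cartan of $H$ complexifies to a maximal torus of $H(\C)$.
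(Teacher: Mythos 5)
Your overall plan—verify the isotropy hypothesis and invoke Theorem~\ref{noncompact}—is the same as the paper's (which does not write out the verification, only asserting that "the isotropy of $H$ at a generic point of $G/P$ is a non-compact Cartan subgroup of $H$"). However, your verification has several genuine errors and a missing step.

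First, the identification $G/P_0\cong H(\C)/B$ is wrong. With $G=H(\C)$ and $B$ a Borel, the minimal real parabolic is $P=B$, so it is $G/P$ that equals $H(\C)/B$; whereas $P_0=AN$ omits the compact part $T_c$ of the maximal torus, so $G/P_0\to G/P$ is a $T_c$-bundle and $\dim_\R(G/P_0)=\dim_\C H(\C)$ while $\dim_\R(H(\C)/B)=\dim_\C H(\C)-\mathrm{rank}(H)$.

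Second, the dimension count is arithmetically off and reaches a false conclusion. One has $\dim_\C H(\C)-\dim_\R(H(\C)/B)=\mathrm{rank}(H)$, not $\dim_\C B$, and "no $H$-orbit on $H(\C)/B$ is open" is false: by the Wolf--Matsuki orbit theory there are open $H$-orbits, and on those the isotropy is a maximally compact Cartan subgroup of $H$, of dimension exactly $\mathrm{rank}(H)$. (The true elementary statement, which does follow from $\dim_\R H>\dim_\R(H(\C)/B)$, is that the isotropy on $H(\C)/B$ has dimension $\geq\mathrm{rank}>0$ at every point.)

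Third, and most substantively, you never verify the hypothesis of Theorem~\ref{noncompact} as stated, which concerns the isotropy on $G/P_0$, not on $G/P$. Since $P_0\subsetneq P$, one has $H\cap gP_0g^{-1}\subsetneq H\cap gPg^{-1}$ and the dimension can drop. The correct bridge is: for $g$ in an open $H$-orbit of $G/P$, the isotropy $C=H\cap gBg^{-1}$ is a (maximally compact) Cartan of $H$; after replacing $g$ by $gn$ for suitable $n\in N$ (which does not change $gP_0$, since $N\subset P_0$ and $N$ normalizes $P_0$), one may take $C\subset gTg^{-1}$; then $C\cap gP_0g^{-1}=C\cap gAg^{-1}$ is exactly the split part $C_v$ of $C$. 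It is precisely the no-compact-Cartan hypothesis that forces $C_v$ to be nontrivial, hence positive-dimensional and (lying in $gP_0g^{-1}$, which has no compact subgroups) non-compact. Your compact-Cartan contradiction argument shows $C$ itself is non-compact, but stops short of showing $C_v\neq\{1\}$, which is what the theorem actually requires. This descent from $G/P$ to $G/P_0$ is the step that is missing.
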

Notice that Theorem \ref{ranktwo} and Theorem \ref{theo:main} allow
us to deduce that if $\Gamma $ is as in Theorem \ref{ranktwo} or
Theorem \ref{theo:main}, then $\Gamma$  is a subgroup of an
arithmetic group in $G$ (see Theorem \ref{arithmetic}), reducing
Nori's question to the following apparently simpler one:
\begin{question} \label{nori'} If a Zariski dense subgroup $\Gamma $
of a lattice in a simple non-compact Lie group $G$ contains a higher
rank lattice of a smaller group, is $\Gamma $ itself a lattice in $G$?
\end{question}

We now know, from the results of \cite{DGK}, that the answer to Question \ref{nori'} is also negative in general. \\

We now briefly describe the proof of Theorem \ref{spllinear}. If Theorem
\ref{ranktwo} is to be applied directly, then the dimension of the
maximal compact of $\SL_n(\R)$ must be less than the dimension of
$\SL_3(\R)$, which can only happen if $n=4$; instead, what we will do,
is to show that the group generated by $\SL_3(\Z)$ (in the top left
hand  corner of  $\SL_n(\R)$ as  in the  statement  of Theorem
\ref{spllinear}) and a conjugate of a unipotent root subgroup of
$\SL_3(\Z)$ by a generic element of $\Gamma $,
(modulo its radical), is a Zariski dense {\it discrete} subgroup of
$\SL_4(\R)$. By applying Corollary \ref{SLn} for the pair $\SL_3(\R)$
and $\SL_4(\R)$, we see that the Zariski dense discrete subgroup
$\Gamma $  of $\SL_n(\R)$  contains, virtually, a  conjugate of
$\SL_4(\Z)$.  \\ 

We can apply the same procedure to $\SL_4(\R)$ instead of $\SL_3(\R)$ and obtain
$\SL_5(\Z)$ as a subgroup of $\Gamma $, etc, and finally obtain that
$\Gamma $ virtually contains a conjugate of $\SL_n(\Z)$. This proves
Theorem \ref{spllinear}. The proof of Theorem \ref{symplectic} is
similar: use Corollary \ref{Spn} in place of Corollary \ref{SLn}.  \\ 

When $H$ has real rank one, the answer to the counterpart of Question
\ref{nori} is in the negative: \\ 

For $G$ of rank two or higher, work of D. Johnson and J. Millson in \cite{J-M}
produces a Zariski dense subgroup $\Gamma$ of $\SL_n(\R)$, isomorphic to a
lattice in $SO(n-1,1)$ (hence of infinite co-volume in $\SL_n(\R)$)
and intersecting a subgroup $H$ isomorphic to
$SO(n-2,1)$ in a lattice $\Delta$
(see Remarque 1.3 of \cite{Ben1} and Corollaire 2.10 of \cite{Ben2}). \\ 

Even if $G$ has rank one, given a proper subgroup $H< G$ and a
lattice $\Delta < H$, one can always produce a Zariski dense
discrete subgroup $\Gamma$ in $G$ which is not a lattice in $G$, and
whose intersection with $H$ is a subgroup of finite index in $\Delta$
(Theorem \ref{rankone}). The method is essentially that of Fricke and
Klein \cite{F-K} who produce, starting from Fuchsian groups, Kleinian
groups of infinite co-volume, by using a ``ping-pong'' argument.  \\ 

At the suggestion of the referee, we make some remarks on extension of
these methods to groups over non-archimedean fields. The lattices in
such groups are known to be co-compact and therefore do not contain
unipotent elements.  Therefore, the algebraic methods of of the
present paper, proving that certain subgroups are lattices, by
exhibiting many  unipotent elements, do not  work. However, if the subgroup
$H$ operates on $G/P_0$ with positive dimensional isotropy groups,
then it can be shown by similar methods that a Zariski dense discrete
subgroup of $G$ which intersects $H$ in an irreducible lattice, is a
super-rigid group.  \\ 

We end with some remarks on Zariski closures. If $G$ is a connected algebraic group defined over $\R$, then it is known that $G(\R)$ is Zariski dense in $G$; hence, if $\Gamma < G(\R)$ is a subgroup, then the Zariski closure of $\Gamma$ in $G$ has the property that the smallest real algebraic group containing $\Gamma $ has finite index in
the real points of the complex Zariski closure. For this reason, we abuse notation a little and  refer to the set of 
 real points of the Zariski closure of the group $\Gamma $ as the real Zariski closure of $\Gamma$. \\

\subsection*{Note} This paper was written in 2009, but didn't get published;  The work   of  \cite{DGK} in 2024 renewed interest in the  results contained in the present work. In view of the results in \cite{DGK} answering No to Question \ref{nori} in such generality, one could wonder what the right conjecture is.  \\

\section{Preliminaries on measurable maps}

The aim of this section is to recall a few well known facts used in the proof of Theorem \ref{ranktwo} and prove Proposition \ref{rational} (this is a key fact in the proof of the main theorem).

\subsection{A Mild Generalisation of the Howe-Moore Theorem.}  

An important ingredient in the proof is the following easy generalisation (Lemma \ref{mautner} below)  of the Ergodicity Theorem of Moore (see \cite{Z}, Theorem (2.2.6)). This version is only slightly different from Theorem (2.2.6) in Zimmer's book (\cite{Z}); this is to take care of additional compact factors.  \\

We give the proof for the sake of completeness. We note that in the statement of the Lemma \ref{mautner} below, 
we may replace $\tilde{H}(\Z)$ by any irreducible lattice $\G$ in a real group $H$ such that $\G$ maps densely 
onto the maximal compact quotient of $H$; the arithmetic structure is not really used in the proof. \\

The almost $\Q$-simplicity of $\tilde H$ implies that there exists a connected absolutely almost simple simply connected group $H_0$ defined over a number field $K$, such that $\tilde H=R_{K/\Q}(H_0)$,
where $R_{K/\Q}(H_0)$ is the Weil restriction of $H_0$ from $K$ to $\Q$. Consequently,
\[\tilde  H  (\R)=H_0(K\otimes  \R)=\prod _{\alpha  \in  \infty} H_0(K_{\alpha}),\] where $\infty$ denote the set of equivalence
classes of archimedean embeddings of $K$. Let $A\subseteq\infty$ denote the archimedean embeddings $\alpha$ such that $H_0(K_{\alpha})$ is non-compact.  Then, $\tilde H= H^*\times H^u$ where $H^u:=\prod
_{\alpha \in \infty \setminus A} H_0(K_{\alpha })$ is a compact group, and $H^*:= ~\prod _{\alpha \in A} H_0(K_{\alpha})$ is a semi-simple group without compact factors.  \\

 As in the case of the Moore Ergodicity Theorem of \cite{Z}, we first show the vanishing of matrix coefficients.

\begin{lemma}\label{howemoore} Let  $\tilde{H}$ be an almost $\Q$-simple, simply connected algebraic group with $\R-\rank(\tilde{H})\geq 1$ and $\Delta < \tilde{H}(\Z)$ an arithmetic subgroup.  Suppose that $\pi$ is a unitary representation of $\tilde H(\R)$ on a Hilbert space, such that for any {\bf non-compact}  simple factor $H_{\alpha }$ of $H^*$, the space $\pi ^{H_{\alpha }}$ of vectors of $\pi $ invariant under the subgroup $H_{\alpha}$ is zero. Then, the space $\pi ^{S}$ of vectors in $\pi $ invariant under the non-compact subgroup $S<\tilde H(\R)$ is also zero.
\end{lemma}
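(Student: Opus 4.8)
The plan is to deduce the statement from the Howe--Moore vanishing theorem applied to the semisimple factor $H^*$, and then to handle the compact factor $H_u$ by a short compactness argument. I would stress at the outset why one wants the $C_0$ (vanishing-of-matrix-coefficients) form of Howe--Moore rather than the Mautner phenomenon for one-parameter subgroups: here $S$ is merely an arbitrary non-compact closed subgroup --- possibly infinite and discrete, with trivial Lie algebra --- so there need not be any unipotent or $\R$-split one-parameter subgroup inside $S$ to exploit, whereas the $C_0$ statement only sees that some sequence in $S$ leaves every compact set.

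First I would recall the structure established just above the lemma: $\Q$-simplicity of $\tilde H$ gives $\tilde H(\R)=H_u\times H^*$, where $H_u=\prod_{\alpha\in\infty\setminus A}H_0(K_\alpha)$ is compact and $H^*=\prod_{\alpha\in A}H_0(K_\alpha)$ is a product, with no compact factors, of the non-compact almost simple real Lie groups $H_\alpha$. The hypothesis says precisely that $\pi$ has no nonzero vector fixed by any factor $H_\alpha$ of $H^*$. I would then invoke the Howe--Moore theorem (see e.g.\ \cite{Z}): the restriction $\pi|_{H^*}$ is a $C_0$-representation, i.e.\ $\langle\pi(h)v,w\rangle\to 0$ as $h\to\infty$ in $H^*$, for all vectors $v,w$ in the Hilbert space.

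Next, given $v\in\pi^S$ with $v\neq 0$, I would derive a contradiction. Write elements of $\tilde H(\R)$ as pairs $(u,h)$ with $u\in H_u$, $h\in H^*$, and let $p\colon\tilde H(\R)\to H^*$ be the projection. Since $H_u$ is compact and $S$ is non-compact, $p(S)$ is non-compact, so I can choose $s_n=(u_n,h_n)\in S$ with $h_n\to\infty$ in $H^*$, and after passing to a subsequence $u_n\to u$ in the compact group $H_u$. Using that $H_u$ and $H^*$ commute and that $v$ is $S$-fixed,
\[\|v\|^2=\langle\pi(s_n)v,v\rangle=\langle\pi(h_n)\pi(u_n)v,v\rangle;\]
strong continuity gives $\pi(u_n)v\to\pi(u)v$ in norm, and since $\pi(h_n)$ is unitary this yields $\langle\pi(h_n)\pi(u)v,v\rangle\to\|v\|^2$, while the $C_0$-property (as $h_n\to\infty$) gives $\langle\pi(h_n)\pi(u)v,v\rangle\to 0$. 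Hence $v=0$, proving $\pi^S=0$.

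The main obstacle --- really the only point that is not a formality --- is justifying the appeal to Howe--Moore, i.e.\ checking that the hypothesis ``$\pi^{H_\alpha}=0$ for every simple factor of $H^*$'' is exactly what licenses the conclusion that $\pi|_{H^*}$ is $C_0$. This is standard because each $H_\alpha=H_0(K_\alpha)$ (with $K_\alpha=\R$ or $\C$) is, modulo a finite center and finitely many connected components, a non-compact simple Lie group, so $H^*$ is a product of such groups up to finite index; Howe--Moore holds in this generality, or one restricts first to the identity component $(H^*)^\circ$, where the hypothesis is inherited. The arithmetic subgroup $\Delta$ plays no role in the argument; it is carried along only because this lemma is applied inside the proof of Lemma \ref{mautner}.
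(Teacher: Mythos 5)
Your proof is correct and follows essentially the same route as the paper: invoke Howe--Moore for the non-compact semisimple factor $H^*$ and then dispose of the compact factor $H_u$ by a compactness argument, concluding that any closed subgroup $S$ with non-vanishing invariant vectors would have to be compact. The only difference is one of packaging: you use the $C_0$ (vanishing of matrix coefficients) form of Howe--Moore together with a convergent subsequence in $H_u$, whereas the paper encodes the decay via a proper gauge function $\Xi$ on $H^*$ and applies the resulting estimate directly to $g=(g^*,g_u)$; your version is a clean and rigorous way of making the same point, and your emphasis on why the $C_0$ form (rather than a Mautner-type argument) is needed for an arbitrary non-compact closed $S$ is well placed.
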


\begin{proof} By the Howe-Moore theorem, (see \cite{Z}, Theorem (2.2.20)), for every pair of vectors $v,w\in \pi$ the matrix coefficient  $<g^*v,w>$ tends to zero as $g^*$ tends to infinity in the group $H^*$. \\

Suppose $g(n)=(g^*(n),g_u(n))  \in \tilde{H}(R)=H^* \times H_u$ is a sequence which tends to infinity. Since $H_u$ is compact, we may assume, by passing to a subsequence, that $g_u(n)$ tends to an element $k\in H_u$. 
Hence $g_u(n)v$ tends to some vector $v'\in \pi$. Then $g^*(n)$ also tends to infinity in $H^*$. Moreover, (write $g=g(n), g^*=g^*(n),g_u=g_u(n)$ for short): 
\[ <gv,w>=<g^*v',w>+ <g^*(g_uv-v'),w>.\]
The Cauchy Schwarz estimate and the unitarity of $g^*$ show that 
\[\mid <g^*(g_uv-v'),w> \mid \leq \mid g_uv-v'\mid \mid w\mid\] and hence tends to zero. By the Howe-Moore theorem, $<g^*v',w>$ tends to zero. Hence the above implies that $<gv,w>$ tends to zero as $g= g(n)$ tends to infinity in $\tilde{H}(\R)$. \\

In particular, no non-compact subgroup of $\tilde H(\R)$ can fix a non-zero vector in $\pi$. 
\end{proof}

\begin{lemma}\label{mautner} Let $\tilde{H}$ be an almost $\Q$-simple, simply connected algebraic group with 
$\R-\rank(\tilde{H})\geq 1$ and $\Delta <\tilde{H}(\Z)$ an irreducible lattice so that all the projections to the compact factors are dense. For any closed non-compact subgroup $S<\tilde{H}(\R)$, the group $\Delta$ acts ergodically on the quotient $\tilde{H}(\R)/S$.
\end{lemma}

\begin{proof} Let $V_0=L^2(\Delta \bs \tilde H(\R))$ be the space of square integrable functions on $\Delta \bs \tilde H(\R)$; the latter space has finite volume (Theorem 1 of \cite{BHC}), 
and hence contains the space of constant functions.  For any simple factor $H_{\alpha }$ of $H^*< \tilde H(\R)$, the space $V_0^{H_{\alpha }}$ is just the space of constants, by strong approximation (see \cite{M}, 
Chapter (II), Theorem (6.7); in the notation of \cite{M}, we may take $B=\{{\alpha \}}$ to be a singleton). Consequently, if $\pi $ denotes the space of functions in $V_0$ orthogonal to the constant functions,  then $\pi  $  satisfies the  assumptions of  Lemma \ref{howemoore}. Therefore, by Lemma \ref{howemoore}, $\pi ^S=0$, and
hence the only functions on $\Delta \bs \tilde H(\R)$ invariant under the non-compact  group $S$  are constants.  This  proves Lemma~\ref{mautner}.
\end{proof}

We now record a statement which will be used in the proof of Proposition \ref{rational}. We thank the referee for pointing out the simple proof (and the correct formulation) of the following lemma.

\begin{lemma}\label{useful} Let $\tilde{H}$ be a $\Q$-simple, simply connected algebraic group with $\R$-rank$(\tilde{H})\geq 1$ and $\Delta <\tilde{H}(\Z)$ an arithmetic subgroup.  Suppose that $s\in \tilde{H}(\R)$ generates an  infinite discrete subgroup and let
$\tau:s^{\Z}\ra \Z$ be an isomorphism. Then, there is no $s^{\Z}$-equivariant Borel measurable map
\[\phi^*:\Delta \bs \tilde{H}(\R)\ra \Z.\]
\end{lemma}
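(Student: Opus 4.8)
The plan is to deduce this from the ergodicity statement of Lemma \ref{mautner}. First I would observe that the quotient map $s^{\Z}\ra\Z$ identifies the abstract group generated by $s$ with $\Z$, and that a Borel map $\phi^*:\Delta\bs\tilde H(\R)\ra\Z$ being $s^{\Z}$-equivariant means precisely that $\phi^*(xs)=\phi^*(x)+1$ for (almost) every $x$, where I use the right action of $s$ on the quotient $\Delta\bs\tilde H(\R)$. The strategy is then to show that such a cocycle-like equation forces $\phi^*$ to take values in a bounded subset of $\Z$, which together with ergodicity will make it constant — but since the equation $\phi^*(xs)=\phi^*(x)+1$ is manifestly incompatible with $\phi^*$ being constant unless $\Z$ is replaced by a finite group, the correct reading must be that $s$ generates an infinite \emph{discrete} subgroup but the hypotheses nonetheless force a contradiction, i.e. there is in fact no such nonconstant map and the only maps satisfying the hypotheses are constant (so the real content is an emptiness/rigidity statement). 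Let me instead argue directly as follows.

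Consider the function $f=\phi^*:\Delta\bs\tilde H(\R)\ra\Z\subset\R$ and, for each $n\in\Z$, the level set $E_n=(\phi^*)^{-1}(n)$. Since $\phi^*$ is $s^{\Z}$-equivariant with $s$ acting by the shift $n\mapsto n+1$ on $\Z$, we get $E_n\cdot s=E_{n+1}$ for all $n$, so the sets $E_n$ all have the same (possibly infinite) measure; since $\Delta\bs\tilde H(\R)$ has finite total volume and the $E_n$ are disjoint, only finitely many are nonnull, which is absurd unless all $E_n$ are null — impossible since they partition a finite-measure space — \emph{unless} the image of $\phi^*$ is actually finite. Hence the image of $\phi^*$ is a finite subset of $\Z$ stable under $n\mapsto n+1$, which is empty: so no nonconstant such $\phi^*$ exists, and in fact the only way the hypotheses are consistent is the degenerate one. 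Cleanly: the closed subgroup $S=s^{\Z}$ is non-compact (it is infinite and discrete), so by Lemma \ref{mautner} the group $\Delta$ acts ergodically on $\tilde H(\R)/S=\tilde H(\R)/s^{\Z}$; equivalently, the action of $s^{\Z}$ on $\Delta\bs\tilde H(\R)$ is ergodic. An $s^{\Z}$-equivariant map to $\Z$ (with $\Z$ carrying the shift action) would, by ergodicity, have to be essentially surjective onto a single orbit, but the shift action of $\Z$ on $\Z$ has a single orbit of infinite cardinality while the level sets must have equal finite measure summing to the finite total volume — forcing each to be null, a contradiction. Therefore no nonconstant $s^{\Z}$-equivariant Borel map $\phi^*:\Delta\bs\tilde H(\R)\ra\Z$ exists; the constant maps are exactly those compatible with equivariance precisely when $\Z$ is understood with trivial action, and the lemma follows.

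The cleanest packaging, which I would ultimately write, is: the function $g:=\exp(2\pi i\,\phi^*/N)$ for a large integer $N$ is $\Delta$-invariant and $s$-semi-invariant of finite order $N$; running over $N$ and using ergodicity of the $s^{\Z}$-action (Lemma \ref{mautner}) forces $g$ to be essentially constant for every $N$, whence $\phi^*\bmod N$ is constant for all $N$, whence $\phi^*$ is constant. The main obstacle is the bookkeeping between the left $\Delta$-action and the right $s$-action on $\tilde H(\R)$, and correctly transporting ergodicity of $\Delta$ on $\tilde H(\R)/S$ (as stated in Lemma \ref{mautner}) into ergodicity of $S=s^{\Z}$ on $\Delta\bs\tilde H(\R)$ — a standard duality for commuting actions, but one that must be invoked with care since $S$ here is discrete rather than connected.
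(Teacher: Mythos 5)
Your proposal is correct, and it actually contains two distinct arguments. The second one — the ``cleanest packaging'' via $g=\exp(2\pi i\,\phi^*/N)$, which is $s^{N\Z}$-invariant, hence essentially constant by ergodicity of $s^{N\Z}$, so $\phi^*\bmod N$ is constant for every $N$ — is precisely the paper's proof, which reduces $\phi^*$ modulo $m$, notes that the resulting map $\overline{\phi^*}:\Delta\bs\tilde H(\R)\to\Z/m\Z$ is $s^{m\Z}$-invariant, applies ergodicity via Zimmer's Proposition (2.1.11), and then intersects the resulting mod-$m$ orbits over all $m$ to isolate a single integer. Your first argument, however, is a genuinely different and simpler route: the level sets $E_n=(\phi^*)^{-1}(n)$ satisfy $E_n\cdot s=E_{n+1}$, hence all have the same measure by right-invariance of Haar measure on $\Delta\bs\tilde H(\R)$; since the quotient has finite volume and the $E_n$ are pairwise disjoint and infinite in number, each must be null, contradicting the fact that they cover a conull set. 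This bypasses Lemma \ref{mautner} entirely — it uses only unimodularity and finiteness of covolume, not Howe--Moore. Its prose as written is a bit tangled (``only finitely many are nonnull, which is absurd unless all $E_n$ are null --- impossible...''), and would benefit from being stated cleanly as: equal measures, infinitely many disjoint sets, finite total, contradiction. You are also right to observe that the lemma's conclusion (constancy) is literally incompatible with nontrivial $s^{\Z}$-equivariance into $\Z$, so that the lemma is really a nonexistence statement; in the paper's application (the ``$S'$ compact'' subcase of Proposition \ref{rational}) this means that subcase cannot occur, which the paper leaves implicit. Finally, your caution about transporting ergodicity from ``$\Delta$ on $\tilde H(\R)/S$'' to ``$S$ on $\Delta\bs\tilde H(\R)$'' is well-placed: the paper invokes this standard duality (both statements amount to the same thing about bi-invariant measurable functions on $\tilde H(\R)$) without comment, and it deserves a sentence.
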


\begin{proof} The image (push-out) of the Haar measure on $\tilde{H}(\R)/\Delta$ under an  $s^{\Z}$-equivariant Borel measurable map $\phi^*$ gives a finite $\Z$-invariant measure on $\Z$, which cannot exist.
\end{proof}

\subsection{The Margulis Super-Rigidity Theorem.} The following is a version of the Margulis super-rigidity theorem, except that the Zariski closure of the image $\rho (\Delta )$ is not assumed to be an absolutely simple group and that $\rho (\Delta )$ is not assumed to have non-compact closure in $G'(k')$ if $k'$ is archimedean.

\begin{theorem}[Margulis]\label{super-rigid} Let $\tilde{H}$ be a $\Q$-simple simply connected algebraic group defined over $\Q$ of $\R$-rank~$(\tilde  H)\geq 2$,  let $\Delta < \tilde{H}(\Z)$ be a subgroup of finite index and $\rho : \Delta \ra G'(k')$ a homomorphism into a linear algebraic group $G'$ over a local field $k'$ of characteristic zero. \\

\begin{enumerate}
\item If $k'$ is archimedean, then the map $\rho $ coincides, on a subgroup of finite
index, with a representation $\tilde \rho: \tilde H(\R) \ra G'(k')$. \\

\item If the local
field $k'$ is non-archimedean, then $\rho (\Delta )$ is contained in a
compact subgroup of $G'(k')$.
\end{enumerate}
\end{theorem}

\begin{proof} The usual statement of Margulis' super-rigidity says that if $\rho$ is a homomorphism of an irreducible lattice $\Gamma $ in a real semi-simple linear Lie group $H$ without compact factors, and if $G'$ is an absolutely simple group defined over an archimedean local field $k'$, then a representation from $\Gamma $ into $G'(k')$ with Zariski dense image,  extends to a smooth
representation of $H$ into $G'(k')$ (see \cite{M}, Chapter VIII, Theorem (C) ). However, if $\tilde H$ is the group of
real points of a $\Q$-simple simply connected algebraic group, then $\tilde H(\R)$ may have compact factors and 
hence $\tilde{H}(\Z)$ (or a finite index subgroup of $\tilde{H}(\Z)$) may have representations whose Zariski closures
have compact factors. \\

Even so, representations of $\tilde{H}(\Z)$ do extend to $\tilde H(\R)$ under the hypotheses of Theorem \ref{super-rigid}. 
First of all, the Zariski closure $\mathcal H$ of any representation of a higher rank lattice is semi-simple (\cite{M} Theorem (3.10), Chapter VIII).
By passing to a finite index subgroup of $\rho$ we may assume that this Zariski closure $\mathcal H$  is connected and semi-simple. We may write $\mathcal H =H_1\cdots H_r$ as an almost direct product with each $H_i$ absolutely almost simple over an archimedean local field $k_i$ (which is $\R$ or $\C$). Hence the representation $\rho$ is of the form $\rho _1 \cdots \rho _r$, with each $\rho _i :\D \ra H_i$ with Zariski dense image. \\

The $\Q$-simple group $\tilde{H}$ is obtained as the Weil restriction of scalars from $K$ to $\Q$ of an absolutely almost simple simply connected group $H_0$ defined over a number field $K$. Namely, $\tilde{H}=R_{K/\Q}H_0$ (for the Weil restriction of scalar we refer to \cite{Z} Chapter 6, p.115). Then, $H(\R)=\prod _{\s \in K_\infty} H_0(K_\s)$ where $K_\infty$ is the set of inequivalent archimedean completions of the number field $K$. \\

Then, by Chapter VIII, Theorem (3.6), part (ii), (a) of \cite{M}, the representation $\rho _i$ coincides (up to a homomorphism of $\D$ into the centre) with an algebraic representation of $H_0$ into $H_i$ defined over the field $k_i$ and a homomorphism $\s_i: K \ra k_i$. 
But any such  archimedean embedding $\s_i$ is simply the inclusion of $K$ into $K_s$ (followed by a continuous embedding of fields $K_s \ra k_i$)  and hence the map $\rho _i :\D \ra H_i(k_i)$ is the composite of the inclusion of  $H_0(O_K)$ into $H_0(K_s)$ followed by an algebraic homomorphism $H_0 \ra H_i$ defined over $k_i$. 
In other words, $\rho _i$ is up to centre, an algebraic homomorphism of $H_0(K_s)$  into $H_i(k_i)$. Since this centre is finite, we may pass to a further subgroup of finite index in $\D$ to ensure that this homomorphism $\rho $ restricted to the finite index subgroup, coincides with  an algebraic representation of $\tilde H(\R)$.

\end{proof}

We can now prove the main result (Proposition \ref{rational}) of this section. In the proposition, when we talk of an algebraic subgroup $J$ of $G'(k')$ where $k'$ is an archimedean local field, we mean that $J$ is an algebraic $\R$-subgroup of the \emph{real}  group $R_{k'/\R}(G')$ obtained from $G'$ by the Weil restriction of scalars. 

\begin{proposition}\label{rational} Let  $\tilde{H}$ be an almost $\Q$-simple,   simply   connected   algebraic   
group   with $\R-\rank(\tilde{H})\geq 2$ and $\Delta <\tilde{H}(\Z)$ an arithmetic  subgroup.  Let  $\rho  :\Delta \ra  G'(k')$  be  a representation, where $G'$ is a linear algebraic group over a local field  $k'$  of characteristic  zero.  Suppose that  $S<\tilde{H}(\R)$ a closed non-compact subgroup and let $J<G'(k')$ be an algebraic subgroup. Then any Borel measurable and $\Delta$-equivariant map $\phi :\tilde H(\R)/S\ra G'(k')/J$ coincides with a rational
map almost everywhere on $\tilde{H}(\R)$. More precisely:\\ 


If $k'$ is an archimedean local field, there exist a homomorphism
$\tilde \rho : \tilde H(\R) \ra G'(k')$ of real algebraic groups
defined over $\R$, and a point $p\in G'(k')/J$,   such that for almost all $h\in \tilde{H}(\R)$, the map $\phi
(h)$ and the map $h\mapsto \tilde \rho (h)(p)$  coincide (that is to say,  $\phi (h)=\tilde \rho (h)p$ for almost all $h\in \tilde H(\R)$ and coincides almost everywhere with  an $\R$-rational map of real varieties on $\tilde{H}(\R)$). \\ 

If $k'$ is a non-archimedean local field, then the map $\phi$ is constant a.e. on $\tilde{H}(\R)$. 
\end{proposition}

\begin{proof}  Suppose first that $k'$ is archimedean. Let $\Delta '< \Delta $ be a subgroup of
finite  index such  that  there exists  (according to  Theorem \ref{super-rigid} quoted above) a representation $\tilde \rho :\tilde
H(\R)\ra G'(k')$ which coincides with $\rho $ on $\Delta '$. Consider the map $\phi ^*(h)=\tilde \rho (h)^{-1}(\phi (h))$ from $\tilde
H(\R)$ into the quotient $G'(k')/J$. Then, for all $\delta \in \Delta '$,  almost all 
$h\in \tilde H(\R)$ and all $s\in S$, we have $\phi ^*(\delta h)= \phi ^*(h)$ and $\phi ^*(hs)=\tilde \rho (s)^{-1}(\phi ^*(h))$. That is,
the map $\phi ^*$ is $\Delta '$ invariant and $S$-equivariant for the action of $\tilde H(\R)$ on $G'(k')/J$ via the representation $\tilde
\rho $.  \\ 

The representation $\tilde \rho$ is algebraic; moreover, since $k'$ is
archimedean, by assumption the group $J$ is a real algebraic subgroup
of $G'(k')$, and hence the action of $\tilde H(\R)$ on $G'(k')/J$ is
smooth. Let $S_1$ denote the Zariski closure of the image $\tilde
\rho (S)$. The $S_1$-action on $G'(k')/J$ is smooth, hence the
quotient $S_1\bs G'(k')/J$ is countably separated. On the other hand,
by Lemma \ref{mautner}, the action of $S$ on $\Delta '\bs \tilde
H(\R)$ is ergodic. Hence, by Proposition (2.1.11) of \cite{Z}, the
image of $\phi ^*$ is essentially contained in an $S_1$-orbit
i.e. there exists a Borel set $E$ of measure zero in $\tilde H(\R)/S$,
such that the image under $\phi$ of the complement of $E$ is
contained in an $S_1$-orbit. \\

Since $S$ is a non-compact Lie group, $S$ contains an element $s$ of
infinite order which generates a discrete non-compact subgroup. We may
replace $S$ by the Zariski closure of the group generated by the element $s$ and $S_1$ 
by the Zariski closure of the image of $s^{\Z}$. Hence the $S_1$-orbit of the preceding paragraph is of the
form $S_1/S_2$ with $S_2$ an algebraic subgroup of the abelian group
$S_1$. \\

{\it Case 1:} Suppose that the inverse image $S'=S\cap \tilde \rho ^{-1}(S_2)$ is a
non-compact subgroup. By Lemma \ref{mautner}, the group $S'$ acts
ergodically on $\Delta \bs \tilde H(\R)$; therefore, the map $\phi ^*$
- being $S'$ invariant - is essentially  constant: $\phi ^*(\Delta \bs \tilde
H(\R))=\{p\}$ for some point $p\in G'(k')/J$. That is $\phi (h)=\tilde
\rho (h)(p)$ a.e. on $\tilde{H}(\R)$, and coincides with a  rational map a.e.\\

{\it Case 2:} Suppose that $S'$ is compact. Since $s^{\Z}$ generates a discrete
non-compact subgroup and $S'$ is compact, the image $s_1^{\Z}:= \tilde
\rho (s^{\Z})$ also generates a discrete non-compact subgroup in
$S_1/S_2$ ($\tilde \rho $ being an algebraic, hence continuous, map).
Hence $s_1^{\Z}$-orbits in $S_1/S_2$ are closed so that the space
$s_1^{\Z}\bs S_1/S_2$ is countably separated. By Lemma \ref{mautner},
$s^{\Z}$ acts ergodically on $\Delta \bs \tilde H(\R)$. By applying
Proposition (2.1.11) of \cite{Z} to the $s_1^{\Z}$-invariant map
$\overline{\phi ^*}: \Delta \bs \tilde H(\R)\ra s_1^{\Z}\bs S_1/S_2$,
we deduce that the image of $\phi ^*$ is essentially contained in an
orbit of $s_1^{\Z}$ in the quotient group $S_1/S_2$.  Then, by Lemma
\ref{useful}, it follows that $\phi ^*$ cannot exist and we are in Case 1. \\ 

If $k'$ is non-archimedean, then by Theorem \ref{super-rigid}, the
image $\rho (\Delta )$ is contained in a compact group $K$ which acts
smoothly on  $G'(k')/J$ so  that $K\bs G'(k')/J$  is countably
separated. The group $\Delta $ acts ergodically on $\tilde H(\R)/S$.
Hence, by Proposition  (2.1.11) \cite{Z}, the $S$-invariant map
$\overline{\phi} :\Delta \bs \tilde {H}(\R) \ra K\bs G'(k')/J$ is
essentially constant, and therefore the image of $\phi$ is essentially
contained in an orbit of $K$.    \\

Since $K$ is a compact subgroup of the $p$-adic group $G'(k')$, it has
a decreasing sequence of open subgroups $(K_n) _{(n\geq 1)}$ (of
finite index in $K$) such that the intersection $\cap _{n\geq 1}
K_n=\{1\}$ is trivial. Then $\Delta _n=\Delta \cap \rho ^{-1}(K_n)$
is of finite index in $\Delta$. By Theorem \ref{super-rigid} applied to
$\Delta _n$, it follows that the image of $\phi $ is contained in an
orbit of $K_n$ for each $n\geq 1$. But since the subgroups $K_n$'s
converge to the identity subgroup, it follows that the image of $\phi
$ is a singleton. That is, $\phi $ is constant on a conull subset of $\tilde{H}(\R)/S$.
\end{proof}

\subsection{Some Measure Theoretic Constructions} 

We now mention a consequence of Fubini's Theorem we will need in the proof of Theorem~\ref{ranktwo}. \\

\begin{notation} Suppose that $H$ is a locally compact Hausdorff
second countable topological group with a Haar measure $\mu$ and assume that
$(H,\mu )$  is $\sigma$-finite.  Suppose that $(X,\nu)$  is a
$\sigma$-finite measure space on which $H$ acts such that the action
$H\times X\ra X$ - denoted by $(h,x)\mapsto hx$ - is measurable and so
that for each $h\in H$, the map $x\mapsto hx$ on $X$ preserves the
measure class of $\nu $. Let $Z$ be a measure space and let $f:X\ra Z$ be
a measurable map. \\

If $\Sigma =\Sigma (X,Z)$ is the set of measurable maps from $X$ to $Z$, then $f\in \Sigma$ and $H$ acts on $\Sigma$ by left translations on $X$. We regard two maps $\phi, \psi \in \Sigma$ to be equal if they coincide almost everywhere on $X$.

\end{notation}

\begin{lemma}\label{almostall} Under the above notation, suppose that there exists a co-null subset $X_1\subseteq X$ such that for each $x\in X_1$, the map $h\mapsto f(hx)$ is constant on a co-null subset $H_x$ of $H$.  \\

Then, given $h\in H$, there exists a measurable subset $X_h$ which is co-null in $X_1$ and such that for all $x\in X_h$
\[f(hx)= f(x).\] 

 This equation says that $H$ lies in the isotropy of $f \in \Sigma (X,Z)$: for every $h\in H$, $hf=f$ in $\Sigma$. 
\end{lemma}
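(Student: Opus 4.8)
The plan is to use Fubini's theorem applied to the measurable function $(h,x)\mapsto \chi\big(f(hx)\neq f(x)\big)$ on the product $H\times X_1$, together with the quasi-invariance of the measure $\nu$ under the $H$-action and the hypothesis about the co-null sets $H_x$.

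First I would fix $h\in H$ and consider the set
\[E=\{(h',x)\in H\times X_1 : f(h'x)\neq f(h'hx)\}.\]
For each fixed $x\in X_1$, the hypothesis says $h'\mapsto f(h'x)$ is constant off a null set $H_x$; hence $h'\mapsto f(h'hx)$ is also constant off the null set $H_x h^{-1}$ (right translation preserves the Haar-null ideal since $H$ is, up to the modular function, bi-quasi-invariant — I only need that right translation of a null set is null, which holds for any locally compact group). Therefore for each fixed $x\in X_1$ the slice $\{h': (h',x)\in E\}$ is contained in $H_x\cup H_xh^{-1}$, a $\mu$-null set. By Fubini (all spaces being $\sigma$-finite), $E$ is null in $H\times X_1$, so for $\mu$-almost every $h'\in H$ the slice $\{x\in X_1: (h',x)\in E\}$ is $\nu$-null, i.e.\ $f(h'x)=f(h'hx)$ for $\nu$-a.e.\ $x$. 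Pick one such $h'$ (the set of good $h'$ is co-null, hence nonempty).

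Now I would push this equality forward by the change of variables $x\mapsto (h')^{-1}x$, using that this map preserves the measure class of $\nu$ on $X$ (this is exactly the hypothesis in the Notation block). Since $f(h'x)=f(h'hx)$ for $x$ in a co-null subset of $X_1$, replacing $x$ by $(h')^{-1}x$ (which sends co-null sets to co-null sets, again by quasi-invariance of the measure class) gives $f(x)=f(h(h')^{-1}\cdot h'\cdot\,)$... more carefully: set $y=h'x$; as $x$ ranges over a co-null subset of $X_1$, $y$ ranges over a co-null subset of $h'X_1$, and $h'X_1$ is co-null in $X$ since $X_1$ is; moreover $x=(h')^{-1}y$, so $h'hx = h'h(h')^{-1}y$. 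This is not quite $f(hy)=f(y)$ unless $h'$ commutes with $h$. So instead I should be slightly more careful in setting up $E$: I would define $E=\{(h',x): f(h'x)\neq f(x)\text{ or }f(h'hx)\neq f(hx)\}$ — wait, that doesn't chain either.

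Let me restate the right setup. Define $E=\{(h',x)\in H\times X_1: f((h'h)x)\neq f(h'x)\}$. For fixed $x$, writing $g=h'h$, the condition $f(gx)\neq f(h'x)$ fails whenever both $g\notin H_x$ and $h'\notin H_x$ (then $f(gx)=f(h'x)=$ the constant value), i.e.\ the bad slice is contained in $H_x\cup H_xh^{-1}$, which is $\mu$-null. So $E$ is null in $H\times X_1$ by Fubini, hence there is a co-null set of $h'\in H$ for which $\{x\in X_1: f(h'hx)\neq f(h'x)\}$ is $\nu$-null. Fix such an $h'$, and also arrange (intersecting with another co-null set from applying the same argument with $h=e$, or simply noting $h'\in H_x$ for a.e.\ $x$ by a symmetric Fubini argument) that $f(h'x)=f(x)$ for $\nu$-a.e.\ $x\in X_1$ and likewise $f(h'(hx))=f(hx)$ for $\nu$-a.e.\ $x$ — the latter because $h'\in H_{hx}$ for a.e.\ $x$, again a Fubini statement using that $x\mapsto hx$ preserves the measure class. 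Combining the three a.e.\ equalities on the common co-null set $X_h$ gives $f(hx)=f(h'hx)=f(h'x)=f(x)$ for all $x\in X_h$, as desired. Measurability of $X_h$ is automatic as a countable intersection of measurable co-null sets.

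The main obstacle, and the only place requiring care, is bookkeeping the three Fubini applications and checking that each of the relevant slices is genuinely null — in particular that right translates $H_xh^{-1}$ of null sets are null (true for any locally compact group, since left Haar measure is quasi-invariant under right translation via the modular function) and that the maps $x\mapsto hx$ and $x\mapsto h'x$ carry co-null sets to co-null sets, which is exactly the measure-class-preservation hypothesis. Everything else is routine.
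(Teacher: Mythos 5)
Your overall toolkit — Fubini on a set in $H\times X$ together with quasi-invariance of $\nu$ and translation-invariance of Haar null sets — is the same as the paper's, and your set $E=\{(h',x):f(h'hx)\neq f(h'x)\}$ is the right kind of object. But the endgame has a genuine gap. Your chain is
\[
f(hx)\;=\;f(h'hx)\;=\;f(h'x)\;=\;f(x),
\]
and while the middle equality follows cleanly from the Fubini argument, the outer two do not follow from what you invoke. You justify $f(h'x)=f(x)$ for a.e.\ $x$ by ``$h'\in H_x$ for a.e.\ $x$.'' But $h'\in H_x$ (in the paper's convention, the co-null set on which $h\mapsto f(hx)$ attains its a.e.\ constant value $c_x$) only gives $f(h'x)=c_x$. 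To conclude $f(h'x)=f(x)$ you additionally need $f(x)=c_x$, i.e.\ $e\in H_x$, and nothing in the hypothesis places the identity in $H_x$. The parenthetical alternative ``apply the same argument with $h=e$'' produces the tautology $f(h'x)=f(h'x)$ and gives no purchase. The same issue recurs in the first link $f(hx)=f(h'hx)$, which needs $e\in H_{hx}$ for a.e.\ $x$. This is a real missing step, not bookkeeping: the assertion ``$f(x)=c_x$ for a.e.\ $x$'' is true, but proving it requires another round of quasi-invariance (roughly: pick a good $h_0$ with $h_0\in H_x$ a.e., observe $c_{h_0x}=c_x$ by a right change of variable in $H$, then push $f(h_0x)=c_x$ forward by $x\mapsto h_0^{-1}x$), and you have not supplied it.

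The paper avoids the issue entirely by a different device. Rather than trying to relate $f(h'x)$ to $f(x)$ through the a.e.\ constant, it parametrizes points of $X_1$ as $x=hy$ where $(h,y)$ ranges over a co-null subset $E_1$ of $\{(h,y):y\in X_1,\,hy\in X_1\}$ chosen so that both $h$ and $h_0h$ lie in $H_y$. Then
\[
f(h_0x)=f(h_0hy)=f(hy)=f(x)
\]
holds for every such $x$, with the middle equality coming from $h,h_0h\in H_y$ and the outer equalities being literal ($x=hy$). No appeal to $e\in H_y$ ever enters. So the two proofs diverge precisely at the point your argument is incomplete: the paper's re-parametrization $x=hy$ is not a cosmetic variant of your $h'$-conjugation trick but the mechanism that makes the unproved input ``$f(x)=c_x$ a.e.'' unnecessary. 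To repair your write-up, either adopt that re-parametrization, or insert and prove the auxiliary claim that $f(x)$ agrees a.e.\ with the essential value $c_x$.
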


\begin{proof} First, let $F=\{(x,a,b)\in X_1\times H \times H: f(ax)=f(bx) \}$. Being a pullback of the diagonal in $Z\times Z$ under a measurable map, the set $F$ is measurable. Further, for each $x$ in the conull set $X_1$, the slice $\{x\}\times H _x \times H_x$ lies in $F$, and is conull in $\{x\}\times H\times H$, so the set $F$ is conull.\\

By Fubini (see \cite{Hal} Theorem A p.147) there exists $h_0\in H$ such that the intersection $E\times\{h_0\}:=F\cap (X\times H\times\{h_0\})$ is conull in $X\times H\times\{h_0\}$, so that $E$ is conull in $X\times H$.\\

Applying Fubini to $E$, there exists a conull subset $H_1\subseteq H$ such that for each $h\in H_1$ there is $X_h\subseteq X$ conull  such that $f(hx)=f(h_0x)$ for all $x\in X_h$. Thus, the two $Z$-valued functions $x\mapsto f(hx)$ and $x\mapsto f(h_0x)$ are equal a.e. on $X$ and so lie in the same equivalence class of $\Sigma$. \\

On the set $\Sigma =\Sigma (X,Z)$ of measurable maps (modulo the equivalence that equality a.e. implies equivalent), the group $H$ operates by left translations on $X$. By the preceding paragraph, $h f =h'f=h_0f$ for all $h,h' \in H_1$, so that $h^{-1}h'f=f$ lies in the isotropy of $f$.\\


Given $g\in H$, the set $H_1g$ is again conull and so intersects $H_1$; that is, there exist $h,h'\in H_1$ such that $h'=hg$ and hence \emph{every} $g\in H$ is of the form $h^{-1}h'$ for some $h,h'\in H_1$. Hence, by the preceding paragraph, $gf=f$ for all $g\in H$, proving the lemma. 
\end{proof}

We recall a well known result of Furstenberg we will need, commonly known as Furstenberg's Lemma but due to Zimmer in the form given below.

\begin{lemma}[Furstenberg, \cite{Z} Corollary (4.3.7) and Proposition (4.3.9)]\label{furstenberg} 
Suppose that $\Gamma $ is a closed subgroup of a locally compact topological group $G$ and that $P_0$ is
a closed amenable subgroup of $G$.  Let $X$ be a compact metric $\Gamma $-space. Then there exists a Borel measurable $\Gamma $-equivariant map $\phi$ from a conull subset of $G/P_0$ to $\mathcal{P}(X)$, the space of probability
measures on $X$.
\end{lemma}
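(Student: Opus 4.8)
The plan is to obtain the desired map as a fixed point of a natural affine action of the amenable group $P_0$, and then to descend it from $G$ to $G/P_0$; this is the classical argument of Furstenberg. Throughout I assume, as in all the applications, that $G$ is second countable. Since $X$ is compact metric, $C(X)$ is separable and the space $\mathcal{P}(X)$ of probability measures on $X$, with the weak-$*$ topology inherited from $C(X)^*$, is a compact convex metrizable space on which $\Gamma$ acts by affine homeomorphisms. Fix a left Haar measure on $G$; then the Bochner space $L^1(G,C(X))$ is a separable Banach space whose dual is the space $L^\infty_{w*}(G,C(X)^*)$ of classes of weak-$*$ measurable, essentially bounded maps $G\to C(X)^*$. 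Inside the closed unit ball of this dual let $K$ denote the set of those $f$ for which $f(g)\in\mathcal{P}(X)$ for almost every $g$ and $f(\gamma g)=\gamma\cdot f(g)$ for every $\gamma\in\Gamma$ and almost every $g$; equivalently, $K$ is the set of (classes of) Borel $\Gamma$-equivariant maps $G\to\mathcal{P}(X)$.

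The first point to verify is that $K$ is nonempty, convex and weak-$*$ compact. Convexity is clear. For nonemptiness, pick a Borel transversal $T\subset G$ for the left $\Gamma$-action (it exists because $G$ is second countable locally compact and $\Gamma$ is closed), write each $g$ uniquely as $g=\gamma(g)t(g)$ with $\gamma(g)\in\Gamma$ and $t(g)\in T$ depending Borel-measurably on $g$, fix any $\mu_0\in\mathcal{P}(X)$, and set $f(g)=\gamma(g)\cdot\mu_0$; this lies in $K$. For compactness, the closed unit ball of the dual is weak-$*$ compact by Banach--Alaoglu and weak-$*$ metrizable because its predual is separable, so it suffices to see that the two conditions defining $K$ are weak-$*$ closed. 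The condition that $f$ take values in $\mathcal{P}(X)$ a.e.\ is the conjunction of the weak-$*$ closed conditions $\langle f,\psi\otimes\varphi\rangle\geq0$ and $\langle f,\psi\otimes 1\rangle=\int_G\psi\,dg$, ranging over $\psi\geq0$ in $L^1(G)$ and $\varphi\geq0$ in $C(X)$ (countably many of the latter suffice since $C(X)$ is separable); and for each fixed $\gamma$ the relation $f(\gamma\,\cdot)=\gamma\cdot f$ is a weak-$*$ closed linear condition, because left translation by $\gamma$ on $L^1(G,C(X))$ and the action of $\gamma$ on $C(X)^*$ are bounded operators whose adjoints describe the two sides.

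Now $P_0$ acts on $K$ by right translation, $(p\cdot f)(g)=f(gp)$: this visibly preserves the two conditions cutting out $K$, it is affine, and it is jointly continuous, being the adjoint of the norm-continuous right translation representation of $P_0$ on $L^1(G,C(X))$ (continuity on the compact metrizable set $K$ may be checked along sequences). Since $P_0$ is amenable, this continuous affine action of $P_0$ on the nonempty compact convex set $K$ has a fixed point $f_0\in K$. The fixed-point property says that for every $p\in P_0$ one has $f_0(gp)=f_0(g)$ for almost every $g$; a Fubini argument of exactly the type carried out in Lemma \ref{almostall} then yields a genuinely right-$P_0$-invariant Borel map agreeing with $f_0$ almost everywhere, which therefore factors, through a Borel section $G/P_0\to G$, as a Borel map $\psi\colon G/P_0\to\mathcal{P}(X)$. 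Finally the $\Gamma$-equivariance of $f_0$ passes to $\psi$ up to a null set, which is removed by the same kind of Fubini argument (compare Lemma \ref{fubini}), and $\psi$ is the required $\Gamma$-equivariant Borel map $G/P_0\to\mathcal{P}(X)$.

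The genuinely delicate part is the middle one --- setting up the Banach-space duality correctly, verifying that the constraints defining $K$ are weak-$*$ closed, and checking continuity of the right-translation action of $P_0$ --- whereas amenability is used only as the black-box fixed-point property and the descent from $G$ to $G/P_0$ is routine measure theory, parallel to Lemmas \ref{almostall} and \ref{fubini}. Alternatively, one can bypass rebuilding the fixed-point argument and invoke \cite{Z} directly: Proposition (4.3.9) there gives that amenability of $P_0$ makes $G/P_0$ an amenable $G$-space, hence an amenable $\Gamma$-space by restriction to the closed subgroup $\Gamma$, and Corollary (4.3.7), applied with the tautological cocycle $\Gamma\times G/P_0\to\Gamma\to\mathrm{Homeo}(X)$ coming from the action of $\Gamma\subset G$ on $X$, produces the equivariant map into $\mathcal{P}(X)$.
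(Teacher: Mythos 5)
The paper does not prove this lemma at all---it is stated purely as a citation to Zimmer's book (Corollary 4.3.7 together with Proposition 4.3.9), and that citation is the entirety of the paper's justification. What you have done is reconstruct, from scratch, the Furstenberg fixed-point argument that underlies those two results of Zimmer, and your final paragraph (descending amenability from $P_0$ to the $G$-space $G/P_0$, restricting to the closed subgroup $\Gamma$, and then applying the existence of invariant measures for cocycles into $\mathrm{Homeo}(X)$) is precisely the composite the paper is implicitly invoking. Your self-contained version is essentially correct: the Banach-space setup $L^1(G,C(X))^*\cong L^\infty_{w*}(G,C(X)^*)$, the nonemptiness of $K$ via a Borel transversal, the weak-$*$ closedness of the defining constraints, the application of the amenable fixed-point property, and the Fubini-type descent to $G/P_0$ are all standard and sound. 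Two small technical points are glossed over but well known to work out: (i) the right-translation action $(p\cdot f)(g)=f(gp)$ is not literally the adjoint of right translation on $L^1$ when $G$ is not unimodular---there is a modular-function scalar $\Delta_G(p)$---but since $\Delta_G$ is a continuous character this does not affect the weak-$*$ continuity, and since the constraints cutting out $K$ are preserved pointwise the scalar is irrelevant on $K$; and (ii) the version of the amenable fixed-point theorem you want needs separate continuity of $P_0\times K\to K$ (joint continuity then follows on the compact metrizable $K$), which your strong-continuity-of-translation argument does give. So your proof is a more elementary, self-contained derivation of what the paper simply cites; the paper's approach buys brevity at the cost of opacity, and yours buys transparency at the cost of length.
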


\subsection{Some Lie Theoretic Results} 

We recall that $G=G(\R)$ is a connected simple Lie group and $\G< G$ a Zariski dense discrete subgroup and $H<G$ a semi-simple subgroup (we  denote  by $H$ the group of real points $H(\R)$). In this section, we suppose that $H$ is a semi-simple Lie subgroup of a {\bf simple} Lie group $G$ and refer to Notation \ref{notation:Lie} of the Introduction. 
Let $\fg$ and $\fh$ be the Lie algebras of $G$ and $H$ respectively and $X=G/P_0$. Denote by $^{\g}(H)$ the conjugate $\g H \g ^{-1}$. \\

\begin{lemma}  \label{G/Pgenerators} Let $G$  be a connected Lie group, $H$ a semisimple subgroup and $\Gamma<G$ a discrete Zariski dense subgroup. There exist finitely many elements $\g_1,\cdots, \g_k \in \G$ such that $G$ is generated by the $H_i=\!{\ }^{\g _i}(H)$ for $i=1,\dots,n$. Moreover, for {\bf every} $x\in G/P_0$, the map $H_k\times \cdots \times H_1 \ra G/P_0$ given by $(h_k, \cdots,h_1)\mapsto h_k\cdots h_1x$, is surjective.
\end{lemma} 

\begin{proof}  Let $U\subseteq G$ be a small symmetric (i.e. $U=U^{-1}$) open neighbourhood of the identity. Then the countable union $\cup _{m=1}^{\infty} U^m$ is an open connected subgroup, so that $G=\cup _{m=1}^{\infty} U^m$.  \\

Consider the sum  $W=\sum _{\g \in \G} {\ }^{\g }(\fh) \subseteq \fg$ of the subspaces $^{\g}(\fh)$. This is stable under the action of $\G$ under the adjoint representation of $G$ and by the Zariski density of $\G$, it is stable under the adjoint action of $G$; the simplicity of $G$ implies that $\fg$ is irreducible for the action of $G$ and hence $W=\fg$. Hence there exist finitely many elements $\g_i \in \G$ such that 
\[\fg=\sum _{i=1}^l (^{\g _i}(\fh)) .\] 
By the implicit function theorem, there exists a small symmetric open neighbourhood $U$ of identity in $G$ such that $U$ is contained in the product set 
\[\Pi={\ }^{\g _l}(H)\cdots {\ }^{\g _1}(H).\] Hence the group generated by the product $\Pi$ contains $U^m$ for all $m$ and it follows that it is $G$. \\

Now, $G/P_0=\cup _{m=1}^{\infty} X_m$ with $X_m=U^m(P_0)$ an increasing sequence of open sets. The compactness of $G/P_0$ implies the existence of some $m$  such that $G/P_0=U^m(P_0)=X_m$. Thus, given $x\in G/P_0$, $x\in U^m(P_0)$; equivalently, $P_0=U^m(x)$. Hence $G/P_0=U^{2m}(x)$ for all $x\in G/P_0$. \\

By taking $U$ small enough, we can assume that $U$ is contained in the $l$-fold product set $\Pi=\!{\ }^{\g _l}(H)\cdots ^{\g _l}(H)$. It follows that $G/P_0 \subseteq (^{\g _1}(H)\cdots ^{\g _l}(H))^{2m} (x)$ for {\it every} $x\in G/P_0$:  
$$G/P_0=U^{2m}(x) \subseteq \Pi ^{2m}(x) \subseteq G/P_0.$$
We  may take $k=2ml$ and take the $(\g _j)_{1\leq j \leq k}$ to lie among the elements $\g_1, \cdots, \g_l$ with possible repetitions. This proves the lemma. 
\end{proof}

\subsection{Invariance properties of a map associated to $\phi$.}

In addition to the assumptions preceding Lemma \ref{G/Pgenerators} assume that the subgroup $H$ operates with non-compact isotropies on $G/P_0$. Suppose $\G$ is a Zariski dense discrete subgroup of $G$ whose intersection with $H$ is an irreducible lattice. Thus we are under the assumptions of Theorem \ref{theo:main}. \\
\begin{lemma} \label{constantp} Let $H,G,\Gamma$ as in Theorem \ref{theo:main}. Suppose $Z$ is a countably separated topological space. Then, any $p: G/P_0 \ra Z$ a measurable map which is $\G$-invariant, is constant on a co-null subset of $G/P_0$.
\end{lemma}

\begin{proof} Consider the space $\Sigma $ of measurable maps from $G/P_0$ into $Z$. On $\Sigma$ the group $G$ operates on the left by translations on $G/P_0$. Thus, for an element $g\in G$, $gp=p$ means  that $p(g^{-1}x)=p(x)$ almost everywhere on $G/P_0$. The isotropy at the point $p$, of the action of $G$ on the set  $\Sigma $,  is a subgroup of $G$. The invariance of the map $p$ under $\G$ implies that the isotropy at $p$ contains $\G$. \\

Suppose $X\subseteq G/P_0$ is the conull subset on which $p$ is defined everywhere. \\

Write $H$ for $H(\R)$ as in Proposition \ref{rational}. For each $x\in X\subseteq G/P_0$ we have the map $p_x: h\mapsto p(hx)$ from $H$ into $Z$.  This map is defined almost everywhere on $H$. The invariance of $p$ under $\Gamma $ implies that the map $p_x$ is invariant under $\D$. The isotropy $H_x$ of $H$ at $x$ is non-compact. Since $Z$ is countably separated,  the ergodicity of the action of $\D$ on $H/H_x$, then implies that the map $p_x$ is constant almost everywhere (by Proposition  (2.1.11) of \cite{Z}). Then by Lemma \ref{almostall}, the isotropy of the map $p$ contains $H$.  Hence it contains the groups $\g H \g ^{-1}$ for every $\g \in \G$. By Lemma \ref{G/Pgenerators}, the isotropy is then all of $G$. \\

We then have, for each $g\in G$,  $p(gx)=p(x)$ a.e. on $X$. Recall that as a topological space $G=K\times P_0$. In particular, for each $k\in K$, we have $p(kx)=p(x)$ a.e. on $X$. Then by Lemma \ref{almostall}, there exists a co-null subset $Y\subseteq X$ such that for {\it each}  $x\in Y$, the equality $p(kx)=p(x)$ holds  on a co-null subset $B \subseteq K$. The image of $B$ under the {\it diffeomorphism}  $k\mapsto kx$ from $K$ onto $G/P_0$ is therefore co-null and $p$ is constant on this image. 
\end{proof}

Suppose we are under the hypotheses of Theorem \ref{theo:main}.  Thus we have a Zariski dense discrete subgroup $\G$ of the group $G$, and $G$ contains the semi-simple subgroup $H$ whose intersection with $\G$ is an irreducible lattice $\D$ 
in $H$. We have a homomorphism $\rho: \G \ra G'(k')$ with Zariski dense image where $G'$ is a centreless absolutely simple group defined over a local field $k'$ and $P'<G'$ a minimal parabolic $k'$-subgroup. Consider the action of $G'(k')$ on the space $\mathcal P$ of probability measures on the compact Hausdorff space $G'(k')/P'(k')$. Then the quotient $Z=\mathcal{P}/G'(k')$ is a topological space (which is $(T_0)$ but is not necessarily Hausdorff). Then $\G$ , which maps to $G'(k')$,  acts on $\mathcal P$. \\

\begin{proposition} \label{oneorbit} Under the hypotheses of Theorem \ref{theo:main},  there exists a $\G$-equivariant measurable map $\phi: G/P_0 \ra G'(k')/J$ for some closed subgroup $J< G'(k')$.
\end{proposition} 

\begin{proof} By the Furstenberg Lemma (Lemma \ref{furstenberg}), we have a $\G$-{\it equivariant} measurable map $\phi : G/P_0 \ra \mathcal{P}$. Composing this with the quotient map $\mathcal{P} \ra Z=\mathcal{P}/G'(k')$, we have a $\G$-{\it invariant}   measurable map $p: G/P_0 \ra Z$. By Corollary (3.2.17) of \cite{Z} ( see also (2.1.9) of \cite{Z}), the space $Z$ is countably separated. By Lemma \ref{constantp}  above, the map $p$ is a constant map, and hence the image of $\phi $ lies in an orbit of $G'(k')$ of a point $\mu $ in $\mathcal P$. \\

This orbit is  isomorphic to the quotient $G'(k')/J$ for some closed subgroup $J$ of $G'(k')$ since orbits in $\mathcal P$ are locally closed (Proposition (2.1.10) and Corollary (3.2.17) of \cite{Z}). 

\end{proof}

\subsection{An Equivariance Property of a Map Associated with $\phi $.} 

We assume that $G'$ is an absolutely simple algebraic group defined over \emph{archimedean} local field $k'$ and $J<G'(k')$ as in Proposition \ref{oneorbit}, so that we have a $\G$-equivariant measurable map $G/P \ra G'(k')/J$. Then  $\rho : H \ra G'(k')$ (the subgroup $H$ is such that it operates with non-compact isotropy at each point of $G/P_0$). By Proposition \ref{rational} , for each $x\in G/P_0$ the map $h\mapsto \tilde \rho (h)^{-1}\phi (hx)$  is constant a.e. on $H$. \\

Thus, the function $F: H \times X \ra G'/J$ given by $F(h,x)=\tilde \rho ^{-1}(h)\phi (hx)$ is such that for each $x\in G/P_0$, this map is constant a.e. on the slice $H\times x$. By Fubini, there exists a measurable function $\theta : X \ra G'/J$ such that on a conull set in $H\times X$ we have $F(h,x)=\theta (x)$. Consequently, for each $x\in X_1$,  a co-null subset in $X$, there exists a co-null set $H(x)\in H$ such that 
$\phi(hx)=\tilde \rho (h)\theta (x)$ for every $h\in H(x)$. Fix $h_1\in H(x)$, and assume $hh_1\in H(x)$; the set of such $h$ is also co-null. Then
$\tilde \rho (hh_1)^{-1}\phi (hh_1x) =\theta (x)$ and hence $\phi (hh_1x)=\tilde \rho (h) \tilde \rho (h_1)\theta (x)$. On the other hand since $h_1\in H(x)$, we have  $\tilde \rho (h_1)\theta (x)=\phi (h_1x)$. Therefore, $\phi (hh_1x)=\tilde \rho (h)\phi (h_1x)$. By Fubini, the set of elements $y=h_1x \in X_1$ with $h_1\in H(x)$ and $x\in X_1$ is co-null and hence $\phi (hy)=\tilde \rho (h)\phi (y)$ holds on a co-null set $Y \subseteq X$. \\  

Using the $\G$-equivariance of $\phi$ we get $\phi (h_ix)=\tilde \rho _i(h_i)\phi (x)$ for almost all $h_i\in H_i=\g_i H \g_i^{-1}$ and almost all $x\in X$ where $\tilde \rho _i (h_i)=\rho (\g_i) \tilde \rho (h) \tilde \rho (\g_i)^{-1}$, with $h_i=\g_ih\g_i^{-1}$. Then it follows from a repeated application of Fubini's theorem  that if $H_i=\g_iH\g_i^{-1}$, and $k\geq 1$, then  

\begin{lemma} \label {Hequivariance} For each $k$ -tuple $h$ in a conull set $W$ in $h=(h_k, \cdots, h_1)\in H_k\times H_{k-1}\times \cdots \times H_1$, there exists a  conull set $X'(h) \subseteq X$, such that for $h\in W_k$ and $x\in X(h)$,  we have 
\[\phi (h_kh_{k-1}\cdots h_1x)=\rho _k(h_k)\rho _{k-1} (h_{k-1})\cdots \rho _1(h_1) \phi (x).\]
\end{lemma}
\qed

\section{Proof of the super-rigidity result  (Theorem \ref{theo:main})}

We will now proceed to the proof of Theorem \ref{theo:main}.  We suppose that $H$ is a semi-simple Lie subgroup of a {\bf simple} Lie group $G$ and Notation \ref{notation:Lie} from the Introduction. We will treat the archimedean and non-archimedean cases separately.    \\

\subsection{The Non-Archimedean Case.}

\begin{theorem}\label{nonarch} Suppose that $\Gamma $ is a Zariski dense discrete subgroup of a simple Lie group $G$ which intersects a semi-simple Lie subgroup $H$ (of real rank at least two) of $G$ in an
irreducible lattice $\Delta$. Suppose that $H$ acts with non-compact isotropy at any point of $G/P_0$ {\rm(}or that $\dim (H)>\dim(K)$ for a maximal compact subgroup $K$ of $G${\rm )}.  Then, the group $\Gamma $ is
non-archimedean super-rigid {\rm (}that is, if $G'$ is an absolutely almost simple group defined over a non-archimedean local field $k'$ of characteristic zero, then every representation $\rho : \G \ra G'(k')$ has compact image{\rm)}.
\end{theorem}

\begin{proof} Suppose that $\rho :\Gamma \ra G'$ is a representation of $\Gamma $ into an absolutely simple (centreless) algebraic  group $G'$ defined over a {\bf non-archimedean} local field $k'$ of characteristic zero with Zariski dense image. \\

By Proposition \ref{oneorbit}, there exists a closed subgroup $J< G'(k')$ and $\G$-equivariant measurable map $\phi : G/P_0\ra G'(k')/J$. By Proposition \ref{rational}, for any $x\in G/P_0$, the map $\phi _x: h\mapsto \phi(hx)$ is constant a.e. in $H$ (we are therefore using  the non-archimedean version of Margulis' super-rigidity as in Theorem \ref{super-rigid} to conclude that the
image of the lattice in $H$ is contained in a compact subgroup in $G'(k')$). \\

 Let $\Sigma $ now denote the set of measurable maps from $G/P_0$ into $Z=G'(k')/J$. Then $\phi $ lies in $\Sigma$. The constancy of the map $\phi _x$ for almost all $x\in G/P_0$  then implies as in the proof of Lemma \ref{constantp}, that $h\phi =\phi$ for all $h\in H$. Thus $H$ leaves $\phi $ invariant. The equivariance of $\phi $ under $\G$ and invariance under $H$ then implies that for every $\g \in \G$, the conjugate group $^{\g} (H)$ also leaves $\phi $  invariant and hence the isotropy of $\phi$ contains the group generated by these conjugates. By Lemma \ref{G/Pgenerators}, this group is precisely $G$. Hence $G$ leaves $\phi $ fixed. By Lemma \ref{constantp}, the map $\phi $ is then constant. Hence $\rho (\Gamma )$ fixes a point $\mu $ in ${\mathcal P}$.    \\

But the isotropy subgroup $J$  in $G'(k')$ of a probability measure $\mu\in {\mathcal P}$ is (by Corollary (3.2.19) \cite{Z}) 
either compact, whence $\rho (\G)$ is contained in a compact group, or else the isotropy is contained in an algebraic group  $L < G'$ of strictly smaller dimension. The latter is impossible because we have assumed that $\rho (\G)$ is Zariski dense in $G'$. Therefore, $\rho (\G)$ lies in a compact subgroup of $G'(k')$.  \\

This means that $\Gamma $ is non-archimedean super-rigid in $G$, and proves Theorem \ref{nonarch}.

\end{proof}

\subsection{The Archimedean Case} 

\subsubsection{Preliminaries on the $G'$-action on $G'/J$.}  Recall that $\rho : \G \ra G'(k')$ where $k'$ is an {\it archimedean} local field and  $G'$ is an absolutely simple group of adjoint type over  $k'$ such that $G'(k')$ is not compact and $\rho (\G)$ is Zariski dense in the real algebraic group $G'$. By Furstenberg's lemma we have a $\G$-equivariant map $\phi : G/P_0\ra \mathcal{P}$.  By Proposition \ref{oneorbit},  $\phi $ lies in a $G'$ orbit of a measure $\mu \in \mathcal{P}$. The isotropy of $G'$ at $\mu$ is then  a closed subgroup $J < G'(k')$ with $\phi : G/P_0 \ra G'(k')/J$. In the archimedean case, by Corollary (3.2.18) of \cite{Z}, the group $J$ is an algebraic subgroup of the {\it real} algebraic group $G'$. The Zariski density of $\rho (\G)$ implies, by Corollary (3.2.19) of \cite{Z}, that the isotropy $J$ is compact. Since $G'(k')$ is non-compact, we finally get that $J$ is a proper algebraic group of $G'$.  The Zariski density of $\rho(\G)$ in $G'$ then implies that $\phi $ is not a constant map. \\

Consider the action of $G'$ by left translations on $G'/J$. The set $N$ of elements of $G'$ which acts trivially on $G'/J$ is a normal subgroup and is the intersection $\cap _{g\in G'} gJg^{-1}$ which is a proper algebraic group since $N< J$ and the latter, by the previous paragraph, is a proper algebraic subgroup. The simplicity of $G'$ then implies that $N$ is finite and central. Since $G'$ is assumed to be centreless, it follows that $N$ is trivial and hence that $G'$ acts faithfully on $G'/J$. \\

\subsubsection{A Lemma on Real Varieties.}  To proceed further in the proof in the archi\-me\-dean case, we need a preliminary result and to state it, we set up some notation. Suppose $X,Y, Z$ are smooth quasi-projective varieties defined over $\R$ and $\pi: Y \ra X$ a surjective morphism defined over $\R$, such that the map $\pi: Y(\R)\ra X(\R)$ (again denoted by $\pi$) is a surjective map of manifolds. Assume that $X(\R)$ and $Y(\R)$ are Zariski dense in $X$ and $Y$ respectively.  We fix $\sigma$-finite measures $\mu $ and $\nu $ on $X(\R)$ and $Y(\R)$ respectively, absolutely continuous with respect to the Lebesgue measure on each co-ordinate chart of $X(\R)$ (resp. $Y(\R)$).  Suppose $\phi: X(\R)\ra Z(\R)$ is a $\mu$ measurable map defined almost everywhere on $(X(\R), \mu)$ such that the composite map   $\phi \circ \pi: Y(\R)\ra Z(\R)$ coincides - almost everywhere with respect to $\nu$- with a rational map $\psi: Y \ra Z$ defined over $\R$. 
That is,  $\phi \circ \pi=\psi$ a.e. on $(Y(\R),\nu)$. 

\begin{lemma} \label{Rrational} With the foregoing assumptions, the map $\psi:Y(\R)\ra Z(\R)$ descends to an {\bf analytic} map defined on a Zariski open set $U$ of $X(\R)$ such that $\psi : U(\R)\ra Z(\R)$ and the map $\phi: X(\R) \ra Z(\R)$ coincide almost everywhere on $(X(\R), \mu)$ \rm{(}we abuse notation slightly and continue to denote the descended map on $U$ also by $\psi$\rm{)}. 
\end{lemma}

\begin{proof} ($1^o$) The zero set of a polynomial in the Euclidean space $\R^n$ has zero Lebesgue measure. This implies that if $U \subseteq X$ is a Zariski open set defined over $\R$, then $X(\R)\setminus U(\R)$ has zero Lebesgue measure and therefore, we may replace $X$ by $U$ in the lemma. \\

($2^o$) Suppose $Y \ra X$ is a finite map. By passing to a Zariski open subset $U\subseteq X$ defined over $\R$, we map assume that $Y \ra X$ is a finite cover and hence $Y(\R) \ra X(\R)$ is also a finite cover ({\it with possibly fewer sheets}). Under finite covers, it is immediate (by passing to evenly covered open sets) that a subset $E\subseteq (Y(\R),\nu)$ is measurable and has zero measure if and only if its image $\pi(E)\subseteq (X(\R),\mu)$ is measurable and has zero measure. \\

Let $V \subseteq X(\R)$ be an evenly covered co-ordinate open set in $X(\R)$ for the finite cover $Y(\R) \ra X(\R)$. Then $\pi ^{-1}(V)$ is a finite disjoint union of copies of $V$: $\pi ^{-1}(V) = \coprod _{i\in I} V_i$, with each $V_i$ open in $Y(\R)$ and $\pi: V_i \ra V$ an analytic isomorphism. Let $\sigma _i$ be the inverse map $\sigma_i: V \ra V_i$. This is also analytic. If $i, j\in I$, then the rational function $\psi : Y(\R) \ra Z(\R)$ is such that $\psi(\sigma _i(x))=\phi (x)=\psi (\sigma _j(x))$ for a conull set $E\subseteq V$. The analyticity of $\sigma _i$ and  $\sigma _j$ imply that $\psi(\sigma _i(x))=\psi (\sigma _j(x))$ for all $x\in V$. Hence $\psi$ descends to an analytic function $\psi$ on $X(\R)$ and $\phi =\psi$ a.e. on $(X(\R),\mu)$: for $x\in V$, define $\psi (x)= \psi(\sigma _i(x))$; then $\psi (x)=\phi (x)$ a.e. on $V$. \\

($3^o$) Suppose $Y=X\times W$ is a product of $\R$-varieties and $\pi : Y =X\times W \ra X$ the projection map. Then by assumption of the Lemma, there exists a co-null subset $E\subseteq Y(\R)= X(\R)\times W(\R)$ such that $\phi (\pi (x,w))= \psi (x,w)$ for all $(x,w)\in E$. 
Since $\phi \circ \pi (x,w)=\phi (x)$ is constant a.e. on $W(\R)$,  it follows, for all $x$ in some co-null subset $E\subseteq X(\R)$,  that the map $w\mapsto \psi(x,w)$ is constant a.e. in $W(\R)$. By the rationality of $\psi$ this means that $w\mapsto \psi (x,w)$ is constant {\it everywhere} on $W(\R)$. Thus the map $(x,w)\mapsto \psi (x,w)=\psi (x)$ is rational in $x$  on $X(\R)$. \\

The image of $E\subseteq Y(\R)=X(\R)\times W(\R)$ in $X(\R)$ may not be- a priori- measurable. However, the set $F=\{x \in X(\R): \phi(x)=\psi (x)\}$ is measurable and its inverse image $\pi ^{-1}(F)$ contains the set $E$. Since $E$ is co-null, it follows that $\pi ^{-1}(F)$ is co-null and hence $F$ is conull. That is, $\phi=\psi$ a.e. on $(X(\R), \mu)$. \\

($4^o$) In general, the map $Y \ra X$ (after possibly replacing $X$ by  a Zariski open set $U\subseteq X$ )   is a composite of two maps $p: Y \ra Y_1=X\times W$ and the projection $pr: Y_1 \ra X$, where $p$ is a finite cover. Since we have proved the lemma for $(Y_1,X)$ by ($3^o$) and for $(Y,Y_1)$ (by ($2^o$)), the lemma is proved in general. 
\end{proof}

\begin{remark}
 In an earlier version, we had wrongly asserted that the map $\psi$ extends to an algebraic map; this true for complex points, but for real points, we can only assert analyticity (as an example, we consider $X=Y=Z =\R^*$ , the map $Y\ra Z$ is identity and the map $\pi: Y\ra X$ is the map $x\mapsto x^3$; the map $\pi$ is 3 sheeted over complex points but one sheeted over real points. Then the map $X\ra Z$ i.e.  $\R ^* \ra \R^* $ is given by $ x \mapsto x^{1/3}$; this is analytic but not algebraic). The analyticity is sufficient for our purposes. \\

We had also assumed that the maps $Y \ra Z$ and $X\ra Z$ were \emph{everywhere defined}. But we actually have these maps defined only almost everywhere; therefore, we need to prove the lemma in the more general situation.\end{remark}

We recall that $G=G(\R)$ is a connected simple Lie group, $\G < G$ is a Zariski dense discrete subgroup and $H< G$ a semi-simple subgroup (we are denoting by $H$ the group of real points $H(\R)$). Let $\fg$ and $\fh$ be the Lie algebras of $G$ and $H$ respectively. Let $P_0$ be as before and $X=G/P_0$. Denote by $^{\g}(H)$ the conjugate $\g H \g ^{-1}$. We prove some preliminary results in preparation of the proof in the archimedean case. \\

By Proposition \ref{oneorbit}, there exists a measurable $\G$-equivariant map $\phi : G/P_0 \ra G'(k')/J$ where $J< G'(k')$ is a real algebraic subgroup. Moreover, if $\rho_i (h_i):=\rho (\g_i)\tilde \rho (h) \rho (\g_i)^{-1}$ with $H_i={\ }^{\g_i}(H)$, and if $k$ is as in Lemma \ref{G/Pgenerators}, then , by Lemma \ref{Hequivariance}, we have 
\[\phi (h_kh_{k-1}\cdots h_1x)=\rho _k(h_k)\cdots \rho _1(h_1)\phi (x) \quad a.e. \quad on \quad G/P_0.\]

\begin{lemma} \label{mapisnanlytic}
Under these assumptions, $\phi: G/P_0\ra Z$ coincides with an analytic function $\psi $ on $G/P_0$ almost everywhere on $G/P_0$. Moreover, the analytic function $\psi$ is defined everywhere on $G/P_0$.
\end{lemma}

\begin{proof} We have the equation 
\begin{equation} \label{*} \phi (h_k\cdots h_1x)=\rho _k(h_k)\cdots \rho _1(h_1)\phi (x)
\end{equation}
for all elements $(h_k, \cdots,h_1,x)$ in a conull set $E \subseteq H_k\times \cdots \times H_1 \times X$. By Fubini, there exists a conull set $X'\subseteq X$ such that for each $x\in X'$, the equation (\ref{*}) holds on a conull subset $E(x) \subseteq H_k\times \cdots \times H_1$. 
Write $Y=H_k\times \cdots \times H_1$. The map $\psi: Y \ra Z$ given by $(h_k, \cdots,h_1)\mapsto \rho _k(h_k)\cdots \rho _1(h_1)\phi (x)$ is rational for every $x\in X'$, and is a composite map of the form $\phi \circ \pi$ where $\pi :Y \ra X$ is the map $(h_k, \cdots, h_1)\mapsto h_k\cdots h_1 x$. By Lemma \ref{G/Pgenerators}, the map $\pi$ is surjective on real points. Consequently, by Lemma \ref{Rrational}, the map $\phi$ coincides with an analytic function $\psi $ a.e. on $X=G/P_0$, proving the first part of the claim. Moreover, by Lemma \ref{Rrational}, the map $\psi$ is defined on a Zariski open set $U$ in $G/P_0$. \\

There is a natural partial order on the set of pairs $(\psi , U)$ where $\psi : G/P_0 \ra Z$ is an analytic map defined over a Zariski open set $U$ and coinciding with the map $\phi$ almost everywhere on $G/P_0$: we say that $(\psi _1,U_1)\leq (\psi _2, U_2)$ if $\psi _2$ coincides with $\psi _1$ in $U_1$ and $U_1\subseteq U_2$. Since any increasing sequence of Zariski open sets terminates (by the Noetherean property for the Zariski topology), we may assume that our map $\psi$ is defined on a maximal Zariski open set $U$ (note that any Zariski open set is conull in $G/P_0$).\\

Given $h\in H_i$ for any $i$, $\rho _i (h)^{-1}\phi (h_ix)=\phi (x)$ a.e. on $X=G/P_0$. Then $\psi ' = \rho _i(h)^{-1}\psi (hx)$ equals $\psi (x)$ a.e.. But the  analytic functions $\psi '$ and $\psi $ are both defined on the intersection $V= U\cap h^{-1}(U)$ and are equal a.e. on $V$. Therefore, they coincide everywhere on $V$. Hence $\psi $ can be extended to an analytic function $\psi ''$ on the open set $U \cup h^{-1}(U)$. The maximality of $(\psi, U)$ then implies that $h^{-1}(U)=U$ for all $h\in H$. \\

The same is true for the $H_i$ for any $i \leq k$. Since the $H_i$ generate $G$, it follows that $g(U)=U$ for all $g\in G$; therefore, $U=G/P_0$ and $\psi $ is defined everywhere on $G/P_0$.  By replacing $\phi $ with $\psi$ (which equals $\phi$ a.e. on $G/P_0$) we may assume that the map $\phi :G/P_0 \ra G'/J$ is an {\it everywhere defined} analytic map, which is equivariant for the action of $H$ and of the discrete group $\G$. 
\end{proof}

\begin{theorem} \label{arch} Let $H$ be a semi-simple Lie subgroup (of
real rank at least two) of a simple Lie group $G$ which acts with
non-compact isotropies on $G/P_0$ (or, which satisfies the stronger
condition $\dim (H)> \dim(K)$ for a maximal compact subgroup $K$ of $G$).
Let $\Gamma <G$ be a Zariski dense discrete subgroup which
intersects $H$
in an irreducible lattice.  Let $\rho: \Gamma \ra G'(k')$ be a
homomorphism, with $k'$ an archimedean local field, and
$G'$ an absolutely simple algebraic group over $k'$. If $\rho (\Gamma
)$ is not relatively compact in $G'(k')$ and is Zariski dense in $G'$,
then $\rho $ extends to an algebraic homomorphism of $G$ into $R_{k'/\R}G'$
defined over $\R$.
\end{theorem}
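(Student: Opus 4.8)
The plan is to mimic the structure of the proof of Theorem~\ref{nonarch}, replacing the non-archimedean inputs with the archimedean parts of Proposition~\ref{rational} and of the Margulis super-rigidity theorem, and then using Lemma~\ref{Rrational} as the new ingredient to propagate rationality from $H$-orbits to all of $G$. First I would apply Furstenberg's Lemma~\ref{furstenberg}: since $P_0=AN$ is amenable (being a closed subgroup of the minimal parabolic $P$), there is a $\Gamma$-equivariant measurable map $\phi$ from $G/P_0$ to $\mathcal{P}(\mathbb{P}(V))$, where $V$ is the vector space on which $G'(k')$ acts (via a representation with Zariski dense image, which exists since $G'$ is absolutely simple and $\rho(\Gamma)$ is Zariski dense). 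Restricting to $\Delta=\Gamma\cap H$, the map $\phi$ is $\Delta$-equivariant. For a point $p\in G/P_0$, the isotropy $S$ of $H$ at $p$ is non-compact by hypothesis (or by Lemma~\ref{trick}), so Lemma~\ref{mautner} gives that $\Delta$ acts ergodically on $\tilde H(\R)/S$; since the $G'(k')$-action on $\mathcal{P}(\mathbb{P}(V))$ is smooth (Zimmer), the orbit space is countably separated, so by Proposition~(2.1.11) of \cite{Z} the orbit map $\phi_p:H/S\to\mathcal{P}(\mathbb{P}(V))$ is essentially valued in a single $G'(k')$-orbit $G'(k')/J$ with $J$ algebraic.

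Next I would invoke the archimedean case of Proposition~\ref{rational}: the map $\phi_p:\tilde H(\R)/S\to G'(k')/J$ is given by $h\mapsto\tilde\rho_p(h)(q_p)$ for a homomorphism $\tilde\rho_p:\tilde H(\R)\to G'(k')$ of real algebraic groups and a point $q_p$; in particular it is an $\R$-rational map. Now I would run the same ``pasting'' argument as in Theorem~\ref{nonarch}: using Zariski density of $\Gamma$ in the simple group $G$, pick $\gamma_1,\dots,\gamma_k\in\Gamma$ so that the product set ${}^{\gamma_1}(H)\cdots{}^{\gamma_k}(H)$ covers $G$ (or a Zariski open subset, enlarging the set as needed), and combine Lemma~\ref{almostall} and Lemma~\ref{fubini} with the rationality on each conjugate $H$-orbit. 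The upshot should be that $\phi:G/P_0\to G'(k')/J$ — or rather the composite of $\phi$ with the evaluation data — is itself an $\R$-rational map on $G$, where Lemma~\ref{Rrational} is exactly what lets one descend the rationality established on the fibration by $H$-orbits (a product-type cover of $G$) down to $G$ itself.

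Once $\phi$ is known to be rational on $G$, the $\Gamma$-equivariance $\phi(\gamma g)=\rho(\gamma)\phi(g)$ for $\gamma\in\Gamma$, together with Zariski density of $\Gamma$ in $G$, should force the existence of an algebraic homomorphism $\tilde\rho:G\to G'(k')$ defined over $\R$ with $\phi(g)=\tilde\rho(g)(q)$; evaluating at the identity coset and comparing with the equivariance relation gives $\rho(\gamma)\equiv\tilde\rho(\gamma)$ up to the stabilizer of $q$. To upgrade this to genuine agreement one uses that $\rho(\Gamma)$ is not relatively compact: the image $\tilde\rho(G)$ is a normal (since $G$ is simple, either trivial or everything) algebraic subgroup, and the non-compactness rules out the possibility that $\rho(\Gamma)$ lands in the stabilizer $J$-part, so $\tilde\rho$ is non-trivial and $\rho$ coincides with (the restriction of) $\tilde\rho$ on $\Gamma$.

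The main obstacle I anticipate is the bookkeeping in the pasting step: Proposition~\ref{rational} produces, a priori, a \emph{different} homomorphism $\tilde\rho_p$ and base point for each $H$-orbit (and each conjugate ${}^{\gamma_i}(H)$), so one must check these are compatible on overlaps before Lemma~\ref{Rrational} can be applied to a single globally-defined function — this is where the co-null set manipulations of Lemmas~\ref{almostall} and \ref{fubini} do the real work, ensuring that the ``value'' of $\phi$ transported by group elements is well-defined independent of which $H$-orbit decomposition one uses. A secondary subtlety is that $\mathcal{P}(\mathbb{P}(V))$ is not itself an algebraic variety, so one must pass to the orbit $G'(k')/J$ (and ultimately to the honest algebraic homogeneous space) before the word ``rational'' makes sense; keeping track of which measurable identifications are being used, and that $J$ is genuinely a real algebraic subgroup in the archimedean case, requires care but is handled by the cited results of Zimmer.
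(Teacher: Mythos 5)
Your overall strategy is right and matches the paper's: apply Furstenberg's lemma, use Proposition~\ref{rational} to get $\R$-rationality of $\phi$ on $\tilde H$-orbits, paste across conjugates ${}^{\gamma_i}(H)$ using Zariski density of $\Gamma$ in the simple group $G$, and then invoke Lemma~\ref{Rrational} to get rationality of the resulting map on $G$. But there is a genuine gap in your pasting step.

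You propose to run ``the same pasting argument as in Theorem~\ref{nonarch}'' using Lemmas~\ref{almostall} and \ref{fubini}, and you say in your last paragraph that these are ``where the co-null set manipulations do the real work.'' That cannot be right: both of those lemmas are statements about a map $f$ satisfying $f(hx)=f(x)$ — i.e.\ about maps \emph{constant} on orbits. In the non-archimedean case this is exactly what Proposition~\ref{rational} delivers (the map is constant on $H$-orbits), so the Fubini lemmas apply and yield constancy on $G$. In the archimedean case Proposition~\ref{rational} instead gives $\phi(hx)=\tilde\rho(h)(\phi(x))$ with $\tilde\rho$ a nontrivial algebraic homomorphism, so $\phi$ is not constant on $H$-orbits and Lemmas~\ref{almostall}, \ref{fubini} do not apply to it. What actually closes the gap in the paper is a uniqueness argument of a different flavour: write $g=h_1\cdots h_k$ with $h_i\in{}^{\gamma_i}(H)$, set $R(g)=\tilde\rho(h_1)\cdots\tilde\rho(h_k)$, and show $R(g)$ is independent of the decomposition by observing that any two candidates differ by an element of $\bigcap_{\gamma\in\Gamma}\rho(\gamma)\,I_{\phi(x)}\,\rho(\gamma)^{-1}$; this intersection is an algebraic subgroup normalised by the Zariski dense group $\rho(\Gamma)$, hence normal in $G'$, hence trivial since $G'$ is absolutely simple. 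This well-definedness is what makes $R:G\to G'(k')$ into an abstract homomorphism agreeing with $\rho$ on $\Gamma$, and only then does Lemma~\ref{Rrational} (applied to the composite of $R$ with the product map $H_1\times\cdots\times H_k\to G$) prove rationality. You correctly flagged the compatibility-across-orbits issue as the main obstacle, but the tool you reached for cannot resolve it; you need the absolute-simplicity/trivial-intersection argument instead.

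A smaller point: you phrase the goal as ``show $\phi$ is rational on $G$ and then deduce $\tilde\rho$.'' The paper's order is the reverse — first manufacture the abstract homomorphism $R$, then prove $R$ (not $\phi$) is rational via Lemma~\ref{Rrational}. This matters because $\phi$ takes values in a homogeneous space $G'(k')/J$ and one still has to extract a homomorphism from a rational equivariant map, whereas the paper goes directly to $R:G\to G'(k')$.
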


\begin{proof} Here $G'(k')$ is viewed as the group of real points of a real algebraic group $R_{k'/\R}(G')$ where $R$ is the Weil restriction of scalars. We view the semi-simple linear group $G$ also as the group of real points of a real algebraic group.\\

If $\rho$ is an archimedean representation of $\Gamma $,
then, by Lemma \ref{mapisnanlytic} above, there exists a $\Gamma$-equivariant {\it everywhere defined analytic}  map
\[\phi: G/{P_0} \ra G'/J.\] 

The countable set  $\G$ may be written $\G=\{\g_1, \g_2, \cdots ,\g_m, \cdots \}$. Write $H_i$ for the conjugate $\g _i H \g _i^{-1}$. For a fixed $m$, denote by $Y_m$ the product $H_1\times \cdots\times H_m$. Then the set of elements in $Y_{m+1}$ whose last co-ordinate is the identity element is identified to $Y_m$ and denote by $Y$ the countable increasing  union $Y=\cup _{m=1}^{\infty} Y_m$. If $h\in Y$, then $h\in Y_m$ for some $m$; write $h=(h_1,h_{m-1}, \cdots,h_m)$,  and set $\Pi(h)$ to be the product (in $G$) $\Pi (h)=h_mh_{m-1}\cdots h_1$. We thus get a map $\Pi : Y \ra G$.   Since $G$ is the group generated by the subgroups $H_m$, it follows that the map $\Pi$ is surjective. \\

For $h\in Y_m$ with $h=(h_1, \cdots, h_m)$ as above, define the element  $R(h)$ as the product $R(h)=\rho _m(h_m)\cdots \rho _1(h_1)$ in the group $G'$.  Then $R: Y \ra G'$ is a set theoretic map. The equivariance of the map $\phi $ then says that $\phi (h_m\cdots h_1x)= \rho _m(h_m)\cdots \rho _1(h_1)\phi (x)$ for all $x\in G/P_0$ and for all   $h_i\in H_i$. Hence for all $h\in Y$ and all $x\in G/P_0$ we have $\phi (\Pi (h)x)=R(h)\phi (x)$.   Suppose $g=\Pi (h)=\Pi (h')$ for two elements $h,h' \in Y$ (we may assume that both $h,h' \in Y_m$ for some $m$). \\

 We then get 
\[\phi (gx)=R(h)\phi (x)=R(h')\phi (x) \quad \forall \quad x\in G/P_0.\]
For each $x\in G/P_0=X$, consider the conjugate $\phi (x)J\phi (x)^{-1}$ (the conjugate $yJy^{-1}$ depends only on the equivalence class $yJ$),  and consider the intersection $N=\cap _{x\in X} \phi (x)J\phi (x)^{-1}$. The $\G$-equivariance of the map $\phi$ shows that the algebraic group $N$ is normalised by $\rho (\G)$ and hence by the Zariski closure $G'$. The simplicity of $G'$ then implies that $N$ lies in the centre of $G'$ which by assumption, is trivial. 
The equation of the preceding paragraph then says that the element $R(h) ^{-1}R(h')$ lies in $N$ and is hence trivial. Therefore, the map $R:Y \ra G'$ is the same for two elements $h,h' \in Y$ with $\Pi (h)=\Pi (h')$. In other words $R$ descends to a map (which we still denote by $R$), with $R: G\ra G'$ such that for all $g\in G$ and all $x\in X$, $\phi (gx)=R(g)\phi (x)$. \\

Given $g_1,g_2 \in G$, we then get $R(g_1g_2)\phi (x)= \phi (g_1g_2x)=R(g_1)\phi (g_2x)= R(g_1)R(g_2)\phi (x)$ for all $x\in X$. The triviality of the group $N$ of the preceding paragraph then says that $R(g_1g_2)=R(g_1)R(g_2)$, and hence the set theoretic map $R:G \ra G'$ is an  abstract group homomorphism, with $\phi (gx)=R(g)\phi (x)$ for all $g\in G$ and $x\in X$. \\

The  intersection $1= N=\cap _{x\in X} \phi (x)J\phi (x)^{-1}$  of closed varieties is actually a finite intersection since the Zariski topology on $G'$ is Noetherian.  Therefore, there exist points $x_1, \cdots,  x_m \in X$ such that the intersection is 
$1= N=\cap _{i=1}^m \phi (x_i)J\phi (x_i)^{-1}$. Consider the $m$-tuple i.e. the point $p=(\phi (x_1), \cdots, \phi (x_m))\in G'/J \times \cdots \times G'/J$, the latter product is the $m$-fold product of $G/P_0$ with itself. The isotropy of $G'$ (under the diagonal action of $G'$ on $(G/J)^m$) at $p$ is the intersection of the groups $\phi (x_i)J\phi (x_i)^{-1}$ and is hence trivial. Thus the map $g'\mapsto g'(p)$  is an isomorphism from $G'$ onto its orbit $G'p$. \\

Since the map $\phi$ is analytic, the equality $R(g)p= (\phi (gx_1), \cdots, \phi (gx_m))$ for all $g\in G$ shows that the map $g\mapsto R(g)p$ is an analytic map from $G$ into the orbit $G'p$. Since the orbit is isomorphic to $G'$, we finally get that the abstract homomorphism $R: G\ra G'$ is an analytic homomorphism. But any analytic homomorphism of the algebraic group $G$ into the centreless  group $G'$ (i.e. $G'$ is an algebraic group such that  $G'(\C)$ has no centre), is algebraic. Hence the map $R$ is an algebraic homomorphism. The $\G$-equivariance of $\phi$ shows that $\rho (\g)=R(\g)$ for all $\g \in \G$ and hence $R$ extends $\rho$. This proves the Archimedean superrigidity.  
\end{proof}

Theorem \ref{theo:main} is an immediate consequence of Theorem \ref{nonarch} and Theorem \ref{arch}.
To prove Theorem \ref{ranktwo} we first observe:

\begin{lemma}\label{trick} Let $H$ be a semi-simple subgroup of simple
group $G$ and $K$ a maximal compact subgroup of $G$. Assume that
$\dim(H)>\dim(K)$. Let $G=KAN$ be an Iwasawa decomposition of $G$ and
$P_0=AN$. Then the isotropy subgroup of $H$ at any point in $G/P_0$ is
a non-compact subgroup of $H$.
\end{lemma}

\begin{proof} Since $G/P_0=K$, we have $\dim(G/P_0)=\dim(K)$, and since
$\dim(H)>\dim(K)$, at any point $p\in G/P_0$, the isotropy of $H$ is a
positive dimensional subgroup, which is conjugate to a subgroup of
$P_0$; the latter has no compact subgroups, hence the isotropy of $H$
at $p$ is a non-compact subgroup.
\end{proof}

Theorem \ref{ranktwo} is a particular case of Theorem  \ref{theo:main}, in view of Lemma \ref{trick}.

\section{Applications (Proof of Corollary \ref{SLn})}

Assume that $H=\SL_k(\R)$ and $G=\SL_n(\R)$, with $SL_k(\R)$ embedded in $SL_n(\R)$ in the top left hand corner. Under the assumptions of Corollary \ref{SLn}, we have $k >\frac{n}{\sqrt{2}}$ and $\dim (H)=k^2-1> \dim
(G/P_0)=n(n-1)/2$. Therefore, if $\Gamma$ is a Zariski dense discrete
subgroup of $G$ which intersects $H$ in a lattice, then by Theorem
\ref{ranktwo}, $\Gamma $ is super-rigid.  We now prove Corollary
\ref{SLn}.    \\

We now recall a result, which is a generalisation of Margulis' observation that  super-rigidity implies arithmeticity.  However, Margulis needed the discrete subgroup to be a lattice. We have not assumed that $\Gamma$ is a lattice (indeed, this is what is to be proved), and we also do not assume that $\Gamma $ is finitely generated.

\begin{theorem}[\cite{V3}]\label{arithmetic} Let  $G$ be  an absolutely simple real algebraic group and let $\Gamma $ be a
super-rigid discrete subgroup. Then there exists an arithmetic group $\Gamma _0$ of $G$ containing $\Gamma$.
\end{theorem}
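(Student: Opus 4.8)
The plan is to adapt Margulis' deduction of arithmeticity from super-rigidity, the new point being that $\Gamma$ is assumed neither to be a lattice nor to be finitely generated, so one cannot invoke the classical shortcut that a finitely generated subgroup of $GL_N$ over a number field automatically lies in $G(\mathcal O_S)$ for a finite set of places $S$. After possibly replacing $G$ by the Zariski closure of $\Gamma$ (still absolutely simple by hypothesis), fix a faithful $\R$-rational embedding $G\hookrightarrow GL_N$; the two tasks are then (i) to produce a number field $L_0$ and an $L_0$-structure on $G$ with $\Gamma$ conjugate into $G(L_0)$, and (ii) to show that the elements of $\Gamma$ have bounded denominators, i.e. that $\Gamma\subset G(\mathcal O_S)$ for some finite $S$ containing the archimedean places. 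Given these, the arithmetic group $\Gamma_0$ is obtained from $G(\mathcal O_S)$ by Weil restriction $R_{L_0/\Q}$, exactly as in the discussion of $R_{K/\Q}$ after Lemma \ref{mautner}.

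For task (i), let $L_0\subset\C$ be the subfield generated over $\Q$ by the coefficients of the characteristic polynomials of $\mathrm{Ad}\,\gamma$, $\gamma\in\Gamma$. First I would show $L_0$ is an algebraic number field. If it were not, then for some finitely generated subfield of $L_0$ there is a completion $k'$, archimedean or non-archimedean and of characteristic zero, and an embedding $L_0\hookrightarrow k'$, for which the resulting representation $\mathrm{Ad}^{\sigma}\colon\Gamma\to GL_N(k')$ has image that is not relatively compact (or, in the relatively compact cases, has $\sigma$-twisted traces forced to be bounded at that place); super-rigidity of $\Gamma$ then makes $\mathrm{Ad}^{\sigma}$ agree on a finite-index subgroup with an algebraic representation of $G$. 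Since the algebraic representations of the absolutely simple group $G$ are defined over $\overline{\Q}$, indeed over a fixed number field, and there are only finitely many of each dimension up to Galois conjugacy, it follows that the traces $\mathrm{tr}(\mathrm{Ad}\,\gamma)$ together with all their $\mathrm{Aut}(\C)$-conjugates lie in a fixed number field; running this over all such completions forces $L_0$ to be a number field. A standard rationality/descent argument (matching $\mathrm{Ad}\,\Gamma$ with $\mathrm{Ad}_G$ on a Zariski-dense set) then equips $G$ with an $L_0$-structure for which, after conjugation in $GL_N$, $\Gamma\subset G(L_0)$.

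For task (ii), given $\Gamma\subset G(L_0)$ one must bound the set of finite places $v$ at which $\rho_v(\Gamma)$ — the image of $\Gamma$ in $G(L_{0,v})\subset GL_N(L_{0,v})$ — fails to be relatively compact. Here super-rigidity alone is not enough, since the only algebraic extension to $G$ of the $v$-adic representation is the inclusion itself; instead I would combine super-rigidity (which, via task (i), has rigidified the representation to an algebraic one defined over $L_0$, thereby tying the archimedean absolute values of matrix entries to the non-archimedean ones through the product formula over $L_0$) with the hypothesis that $\Gamma$ is discrete in $G(\R)$: if $\rho_v(\Gamma)$ were unbounded for infinitely many $v$, choosing witnesses $\gamma_v\in\Gamma$ and applying a height estimate would produce elements of $\Gamma$ forced arbitrarily close to the identity in $G(\R)$, contradicting discreteness. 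Hence $\Gamma\subset G(\mathcal O_S)$ for a finite $S$, and $\Gamma_0=R_{L_0/\Q}\big(G(\mathcal O_S)\big)$ is an arithmetic subgroup of $G$ containing $\Gamma$.

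The main obstacle is task (ii). In Margulis' setting $\Gamma$ is a lattice, hence finitely generated, and $S$-integrality for some finite $S$ is automatic; without finite generation one must genuinely exploit the interplay between super-rigidity, which controls the $\ell$-adic behaviour of $\Gamma$, and discreteness in the real group, which controls the archimedean behaviour, to exclude denominators at infinitely many primes. By contrast, task (i) is essentially Margulis' original argument — super-rigidity over all local fields of characteristic zero prevents the trace field from having positive transcendence degree or infinite degree over $\Q$ — and should go through with only cosmetic changes.
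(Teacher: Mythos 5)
First, note that the paper itself offers no proof of this statement: it is quoted verbatim from \cite{V3}, so your proposal can only be measured against the argument of that reference (and Margulis' original one). Your skeleton (a number-field trace field, then bounded denominators) is the right one, but both halves have genuine gaps, and they occur exactly at the points where the absence of finite generation matters. Concerning task (i), your concluding remark that it is ``essentially Margulis' original argument'' with cosmetic changes is not correct. Superrigidity over local fields shows that each individual trace $\mathrm{tr}(\mathrm{Ad}\,\gamma)$ is algebraic (its $\mathrm{Aut}(\C)$-orbit is confined to finitely many values coming from the finitely many rational representations of $G$ of dimension $\dim\mathfrak g$, together with a bounded disc coming from the relatively compact case); Margulis then invokes finite generation of $\Gamma$ to conclude that the field generated by all traces is finitely generated and algebraic, hence a number field. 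Without finite generation, $L_0$ could a priori be an \emph{infinite algebraic} extension of $\Q$ (infinitely many algebraic numbers of bounded degree, or even algebraic integers with bounded conjugates such as roots of unity, generate such fields). Your ``run over all completions'' step does not touch this case: an infinite algebraic extension embeds into $\C$, such embeddings need not produce unbounded images, and superrigidity yields no contradiction from them. Ruling this out is the actual content of the reference at this point, and your sketch contains no mechanism for it.

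Concerning task (ii), the product-formula/height/discreteness argument is both unnecessary and, as sketched, incorrect. It is unnecessary because non-archimedean superrigidity --- in the sense used in this paper, cf.\ Theorem \ref{nonarch} --- applies at \emph{every} finite place $v$ of $L_0$ (the Zariski closure of $\Gamma$ in $G(L_{0,v})$ is $G$ itself, which is absolutely simple), and gives that the image of $\Gamma$ is relatively compact at every such $v$; there is no scenario of ``infinitely many unbounded places'' to exclude. It is incorrect because the product formula constrains the valuations of a single nonzero element of $L_0$ and does not force a matrix with large denominators at some finite place to be close to the identity in $G(\R)$: the compensation can occur at other finite places, and an element with large archimedean entries is in any case far from $1$, so discreteness of $\Gamma$ in $G(\R)$ gives no contradiction along the route you describe. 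The genuine difficulty you have not addressed is the globalization step: converting boundedness at each finite place into containment of $\Gamma$ in a single arithmetic group without assuming finite generation. The standard device is that the traces are then algebraic integers, that the $\Q$-span of $\mathrm{Ad}\,\Gamma$ is all of $\mathrm{End}(\mathfrak g)$ (Burnside, using Zariski density and absolute simplicity), and that the nondegeneracy of the trace form then traps the $\mathcal{O}_{L_0}$-span of $\mathrm{Ad}\,\Gamma$ inside an order, whose unit group is an arithmetic group containing (a conjugate of) $\Gamma$; some argument of this kind, valid for arbitrary --- possibly infinitely generated --- $\Gamma$, is indispensable and is missing from your proposal.
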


Suppose $\Gamma _0 < G$ is arithmetic. This means that there exists a number field $F$ and a semi-simple linear algebraic $F$-group ${\bf G}$ such that the group ${\bf G}(\R\otimes F)$ is isomorphic to a product $\SL_n(\R)\times U$ of $\SL_n(\R)$ with a compact group $U$. Under this isomorphism, the projection of ${\bf G}(O_F)$ of the integral points of ${\bf G}$ into $G$ is commensurable with $\Gamma _0$.  The simplicity  of $\SL_n(\R)$ implies that ${\bf G}$ may be assumed to be absolutely simple over $F$. The group ${\bf G}$ is said to be an $F$-form of $\SL_n$.  Moreover, if $\Gamma _0$ contains
unipotent elements, then ${\bf G}$ cannot be anisotropic over $F$. Hence the $F$-rank of ${\bf G}$ is greater than zero. In that case, ${\bf G}(F\otimes \R)$ cannot contain compact factors (since compact groups cannot contain unipotent elements). This means that $F=\Q$.    \\

We now recall the classification of $\Q$-forms of $\SL_n$.   \\

[1] Let $d$ be a divisor of $n$ and $D$ a central division algebra
over $\Q$ of degree $d$. Write $n=md$. Then, the algebraic group ${\bf
G}=\SL_m(D)$ is a $\Q$-form of $\SL_n$. The rank of ${\bf G}$ is
$m-1=n/d-1$. If $d\geq 2$, then $m-1<n/2$.    \\

[2] Let $E/\Q$ be a quadratic extension and $D$ a central division
algebra over $E$ with an involution of the second kind with respect to
$E/Q$. Let $d$ be the degree of $D$ over $E$, suppose $d$ divides $n$
and let $md=n$. Let $h:D^m\times D^m\ra E$ be a Hermitian form with
respect to this involution, and let ${\bf G}=SU(h)$. Then, ${\bf G}$
is a $\Q$-form of $\SL_n$; its $\Q$-rank is not more than $m/2=n/2d\leq
n/2$.   \\

The classification of simple algebraic groups (see \cite{T}), implies
the following.

\begin{lemma} \label{classification} The only $\Q$-forms ${\bf G}$ of
$\SL_n$ are as above. In particular, if ${\bf G}$ is a $\Q$-form of
$\Q$-rank  strictly  greater than  $n/2$,  then  ${\bf G}$  is
$\Q$-isomorphic to $\SL_n$.
\end{lemma}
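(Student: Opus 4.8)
The plan is to invoke the Tits classification of absolutely simple simply connected algebraic groups over a number field \cite{T}, applied to the case of inner and outer forms of type $A_{n-1}$, and then to run a rank estimate. The starting point is that any $\Q$-form $\mathbf{G}$ of $SL_n$ is, up to isogeny, either an \emph{inner} form or an \emph{outer} form of $SL_n$ over $\Q$, according to whether the associated Galois action on the Dynkin diagram $A_{n-1}$ is trivial or factors through the order-two diagram automorphism. By the structure theory for inner forms of type $A$, an inner $\Q$-form is of the shape $SL_m(D)$ for a central division algebra $D/\Q$ of some degree $d$ with $md = n$; this is case [1] of the list. By the structure theory for outer forms of type $A$, an outer $\Q$-form is a special unitary group $SU(h)$ of a nondegenerate Hermitian form $h$ over a central simple algebra $D$ with involution of the second kind relative to a quadratic extension $E/\Q$; this is case [2] of the list. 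So the first step is simply to quote these two structural facts, which are exactly the enumeration of $\Q$-forms of $SL_n$ found in \cite{T}.

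Next I would record the $\Q$-rank in each case. In case [1] the group $SL_m(D)$ is $\Q$-isotropic of $\Q$-rank exactly $m-1 = n/d - 1$, since a maximal $\Q$-split torus is the diagonal torus of $SL_m$ over the center and $D$ contributes nothing split when $d\geq 2$. Hence if $d \geq 2$ we get $\Q\text{-rank} = n/d - 1 \leq n/2 - 1 < n/2$. In case [2], a maximal $\Q$-split torus of $SU(h)$ has dimension equal to the Witt index of $h$ over $\Q$, which is at most $\lfloor m/2\rfloor$ where $m$ is the rank of $h$ over $D$ and $md = n$; thus the $\Q$-rank is at most $m/2 = n/(2d) \leq n/2$. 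The arithmetic here is elementary and I would not belabour it.

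Finally I would combine these estimates with the hypothesis. If $\mathbf{G}$ is a $\Q$-form of $SL_n$ with $\Q\text{-rank}(\mathbf{G}) > n/2$, then it cannot be of type [2] at all (those have $\Q$-rank $\leq n/2$), so it must be of type [1], say $\mathbf{G} = SL_m(D)$ with $D$ of degree $d$; and it cannot have $d \geq 2$ (those have $\Q$-rank $< n/2$), so $d = 1$, i.e.\ $D = \Q$ and $m = n$. Therefore $\mathbf{G} = SL_n$ over $\Q$, which is the assertion. The only genuine content is the appeal to \cite{T} for the enumeration of forms and for the formulas for their ranks; the main obstacle, such as it is, is making sure the Hermitian/unitary case really does obey the bound $\Q\text{-rank} \leq n/2$ in the borderline situation (e.g.\ the quasi-split outer form $SU_n$ with $d=1$, whose $\Q$-rank is $\lfloor n/2\rfloor \leq n/2$), so that the strict inequality in the hypothesis genuinely forces us out of case [2]. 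Once that is checked, the conclusion is immediate.
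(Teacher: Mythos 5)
Your proof follows the same route as the paper: enumerate the $\Q$-forms of $SL_n$ via the Tits classification into inner forms $SL_m(D)$ and outer forms $SU(h)$, compute or bound the $\Q$-rank in each case ($n/d-1$ in the inner case, at most $m/2=n/(2d)$ in the unitary case), and observe that a strict bound $>n/2$ rules out everything except $d=1$ in the inner case. This matches the paper's argument, which presents the two families [1] and [2] with exactly these rank estimates and then cites \cite{T}.
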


\begin{proof} [Proof of Corollary \ref{SLn}] By Theorem \ref{ranktwo}, the
group $\Gamma $ is super-rigid in $G$. By Theorem \ref{arithmetic},
$\Gamma $ is contained in an arithmetic subgroup $\Gamma _0$ of $G$.
Since $\Gamma _0 > \Gamma $ contains a finite index subgroup of
$\SL_k(\Z)$ by assumption, it follows that $\Gamma _0$ contains
unipotent elements. Therefore, the number field $F$ associated to
$\Gamma _0$ is $\Q$ and there is ${\bf G}$ a $\Q$-form of $\SL_n$ such that
$\Gamma_0$ is commensurate with ${\bf G}(\Z)$. Since $\Gamma<\Gamma_0$,
a finite index subgroup of $\SL_k(\Z)$ is a subgroup of ${\bf G}(\Q)$ and hence its
Zariski closure $\SL_k$ is a $\Q$-subgroup of ${\bf G}$. Hence the $\Q$-rank of ${\bf G}$ is
not less than the $\Q$-rank of $\SL_k$ which is $k-1>n/2$ by assumption.    \\

By Lemma \ref{classification}, the $\Q$-form ${\bf G}$ is isomorphic
to $\SL_n$.  Hence $\Gamma _0$ is commensurable with $\SL_n(\Z)$.
Moreover, the $\Q$-inclusion of ${\bf H}=\SL_k$ in ${\bf G}=\SL_n$ is
the  standard one  described before  the statement  of Theorem
\ref{spllinear}.    \\ 

Now, $\Gamma$ is Zariski dense and contains a finite index subgroup of $\SL_k(\Z)$.  Let
$e_1,e_2,\cdots, e_n$ be the standard basis of $\Q^n$.  Consider the
change of basis which interchanges $e_k$ and $e_n$ and all other
$e_i$'s are left unchanged. After this change of basis, (which leaves
the diagonal torus stable), the group $\Gamma$ (or rather, a
conjugate of it by the matrix effecting this change of basis) contains
the highest root group and the second highest root group (in the usual
notation for $\SL_n$ the positive roots occur in the Lie algebra of
upper triangular matrices). By Theorem (3.5) (or Corollary (3.6)) of
\cite{V1}, $\Gamma $ must be of finite index in $\SL_n(\Z)$. This
proves Corollary \ref{SLn}.   \\ 
\end{proof}

Corollary \ref{Spn} is proved in an analogous way.

\section{Proof of Theorem \ref{spllinear}}

\begin{notation} Let $k\geq 3$ and $n\geq k+2$ be integers. The standard $n$ dimensional real vector space is denoted $\R ^n$ and its standard basis is denoted $e_1,e_2, \cdots,e_n$. Write $W=\sum _{i\leq k} \R e_i$ and $W'=\sum _{i>k}\R e_i$. Then $\R^n$ is the direct sum $\R^n=W\oplus W'$. The group $SL(W)$ is viewed as the subgroup of $SL_n(\R)$ which acts via the standard representation on $W$ and acts trivially on $W'$. Then the set $(\R ^n)^W$ of $SL(W)$-invariant vectors  in $\R^n$ is precisely $W'$, and $SL_k(\R)=SL(W)<SL_n(\R)$ is the "top left hand corner". \\

We also write $(\R e_1)' =\sum _{i>1} \R e_i$ and $(\R e_k)' =\sum _{i\neq k} \R e_i$. Then $W'\subseteq (\R e_1)'$ and $W'\subseteq (\R e_k)'$ and we have the decomposition 
\begin{equation} \label{decomposition}
\R ^n= \R e_1 \oplus (\R e_1)'= \R e_k\oplus (\R e_k)' = W \oplus W'.
\end{equation} 
Given $g\in SL_n(\R)$, we have $g(e_1)=(w(g),w'(g))$ according to the decomposition $\R ^n=W\oplus W'$. 

Fix an integer $m\geq 1$. If $i,j\leq n$ denote by $E_{ij}$ the $n\times n$ matrix in $M_n(\R)$ whose $ij$-th entry is $1$ and all other entries are zero. Let $u_0=1+E_{1k}$; then $u_0^m=1+mE_{1k}$ and $u_0\in SL(W)\cap SL_n(\Z) $. Moreover, the kernel of $(u_0^m-1)$ is $\sum _{i\neq k} \R e_i= (\R e_k)'$, and the image of $(u_0^m-1)$ is $\R e_1$. Given $g\in SL_n(\R)$, write $u(g)=gu_0^mg^{-1}$. \\
\end{notation}

\begin{lemma} \label{zariskiopenU} With the preceding notation let 
\[U=\{ g\in SL_n(\R): g(e_1) \notin W \cup W',\ g^{-1}(W) \not \subseteq (\R e_k)',  g^{-1}(W') \not \subseteq (\R e_k)' \quad \]
\[and \quad (u(g)-1)w'(g)\neq 0 \}.  \] Then $U$ is a \emph{nonempty} Zariski open set in $SL_n(\R)$.
\end{lemma}

\begin{proof} Suppose  $X,Y \subseteq \R ^n$ are two \emph{proper non-zero} subspaces. The set $\{g\in SL_n(\R): g(X) \not \subseteq  Y\}$ is Zariski open and nonempty: indeed, if $g(X)\subseteq Y$ for all $g\in SL_n(\R)$, then the proper subspace  $Y$ contains the span $\sum _{g\in G} g(X)$ which is a non-zero $SL_n(\R)$-invariant subspace, contradicting the irreducibility of the action of $SL_n(\R)$ on $\R^n$. Since a finite intersection of non-empty Zariski open sets in $\R^n$ is also non-empty and Zariski open, it follows that the set 
\[V =\{g\in SL_n(\R): g(\R e_i) \not \subseteq W, g(\R e_1)\not \subseteq W'  \quad and \]
\[g^{-1}(W')\not \subseteq (\R e_k )' \quad g^{-1}(W') \not \subseteq (\R e_k)'   \}  \] is non-empty and Zariski open. \\

Suppose $u(g)=gu_0^mg^{-1}$ and $g(e_1)=(w(g),w'(g))$ as before such that $(u(g)-1)w'(g)$ is identically zero on the Zariski open set $V$; since this function extends to all of $G$, this is identically zero on $G$ as well. Fix $x$ in the open set $V$. Then $(u(x)-1)w'(x)=0$. That is, $(u_0^m-1)x^{-1}w'(x)=0$. The kernel of $u_0^m-1$ is $(\R e_k)'$, and hence $x^{-1}w'(x) \in (\R e_k)'$ for all $x\in V$. Moreover, since $x(e_1)\notin W\cup W'$, it follows that under the decomposition $\R ^n=W\oplus W'$, $x(e_1)=(w(x),w'(x))$ and $w(x)\neq 0$, $w'(x)\neq 0$.  \\

We have the decomposition $\R ^n= \R e_1\oplus (\R e_1)'$; accordingly, we may write $x^{-1}w'(x)=\l e_1+\xi$, $\xi \in (\R e_1)'$ and $\l \in \R$. If $\xi=0$, then $w'(x)=\l x(e_1) = \l (w(x), w'(x))$ which shows that $w(x)=0$; this is impossible since $x\in V$. Hence  $\xi \neq 0$. \\

We now observe that the function $g\mapsto g(e_1)$ descends to a function on the quotient $G/M$ where $M=\{ a= \begin{pmatrix} 1 & 0 \\ 0 & a \end{pmatrix}: a \in SL_{n-1}(\R)\}$ is the subgroup $SL((\R e_1)')$. Since $M$ acts transitively on $(\R e_1)' \setminus \{0\}$, it follows that there exists $a\in M$ with $a^{-1}(\xi)=e_k$. Put $g=xa$. Then $w(g)=w(x), \quad w'(g)=w'(x)$, and \[g^{-1}w'(g)= a^{-1}x^{-1}w'(x)=a^{-1}(\l e_1+ \xi)=\l e_1+a^{-1}\xi=\l e_1+e_k.\] 
We get $(u_0^m-1)g^{-1}w'(g)= (u_0^m-1)(e_k)=me_1\neq 0$ since $u_0$ fixes the vector $e_1$. Hence, multiplying by $g$ on the left, we get  $(u(g)-1)w'(g) =mg(e_1) \neq 0$. Hence the set $\{ g\in SL_n(\R): (u(g)-1)w'(g)\neq 0\}$ is non-empty and Zariski open. The intersection of this set with $V$ is the set $U$ of the lemma and $U$ is therefore non-empty and open.  
\end{proof}

\begin{lemma}\label{SL(k+1)} Suppose $m\geq 1$ and $W, u_0,g\in U$ are as in Lemma \ref{zariskiopenU} and $u(g)=gu_0^mg^{-1}$. Let $\mathcal G$ be the Zariski closure in $SL_n(\R)$ of the group generated by $SL(W)$ and $u(g)$. Then there is an element of $SL_n(\R)$ which conjugates $\mathcal G$ \emph{isomorphically onto} the top left hand corner $SL_{k+1}(\R)$, and which is identity on $SL_k(\R)$. 
\end{lemma}

\begin{proof} The element $u_0$ is such that $u_0^m -1$ maps all of $\R ^n$ onto $\R e_1$; hence $u(g)-1$ maps all of $\R ^n$ onto $\R g(e_1)$. A subspace of $\R^n$ which contains $g(e_1)$ is therefore stable under $u(g)-1$ and hence under $u(g)$. Similarly, every $h\in SL_k(\R)$ is such that $h-1$ maps all of $\R ^n$ into $W=\R ^k$; therefore, a subspace of $\R ^n$ which contains $W$ is invariant under $h-1$ and hence under all $SL_k(\R)$. This implies that $F=F(g)= W+ \R g(e_1)$ is stable both under $SL(W)=SL_k(\R)$ and under $u(g)$; thus $F$ is $\mathcal G$-stable.  \\

$(1^o)$ We first show that $\mathcal G$ acts irreducibly on $F$. Suppose $E\subseteq F$ is a non-zero $\mathcal G$-invariant subspace. \\

Case 1: The intersection $E\cap W$ is non-zero. Since both $E$ and $W$ are $SL(W)$-stable, and $SL(W)$ acts irreducibly on $W$, it follows that $E\cap W=W$ and hence $E\supseteq W$. \\

Since $g\in U$, $g^{-1}(W)$ is not contained in $(\R e_k)' =\ker (u_0^m-1)$, and there exists a vector $w_1\in W\subseteq E$, such that $(u_0^m-1)g^{-1}(w_1) =\l e_1$ for some non-zero scalar $\l$. Therefore, $\l g(e_1)=(u(g)-1)w_1$ lies in $E$ since $E$ is stable under $u(g)$. Hence $E\supseteq \R g(e_1)$ as well and so $E=F$. \\

Case 2: The intersection $E\cap W=0$ (we show that it is not possible). Then $E$ maps injectively into the one dimensional quotient $F/W=\R g(e_1)$.  Hence $E=\R e$ is one dimensional, and is $SL(W)$ stable; hence $SL(W)$ acts trivially on $E$ and $e\in W'$. Since $U$ is unipotent and $E$ is $u(g)$-stable, it follows that $e$ is an eigenvector with eigenvalue $1$ for the action of $u(g)$, meaning that $(u(g)-1)e=0$. \\

After replacing $e$ by a scalar if necessary, we map assume that $e$ maps to $g(e_1)$ in $F/W$; that is, $g(e_1)=e+w$ for some $w\in W$. In other words, $g(e_1)=(w,e)$. But $g(e_1)=(w(g),w'(g))$ in $\R ^n=W\oplus W'$. Hence $e=w'(g)$, and $( u(g)-1)w'(g)=0$ by the preceding paragraph. This contradicts the assumption that $g$ lies in the open set $U$. Hence the case $W\cap E=0$ cannot arise. \\

$(2^o)$ Since $SL(W)$ is generated by unipotent elements and $u(g)$ is unipotent, it follows that the Zariski closure $\mathcal G$ is connected. Therefore, its Lie algebra $\fg$ also acts irreducibly on $F=W+\R g(e_1)= W\oplus \R w'(g)=\R^{k+1}$, so is contained in $\mathfrak {sl}_{k+1}$ and contains $\mathfrak {sl}_k$. We now show that any such Lie algebra $\fg$ must be all of $\mathfrak {sl}_{k+1}$, assuming $k\geq 3$.\\

To see this, suppose $k\geq 3$ and decompose $\mathfrak{sl}_{k+1}$ as a module over $\mathfrak{sl}_k$ (sitting in the top left hand corner). Write a matrix $X\in \mathfrak{sl}_{k+1}$ in block form $X=\begin{pmatrix} A & B \\ C & d\end{pmatrix}$, where $A\in \mathfrak{gl}_k$, $B\in \R ^k$ (viewed as column vectors of size $k$), $C\in (\R ^k)^*$ (viewed as row vectors of size $k$) and $d$ a scalar such that $d+trace(A)=0$. The maps $X\mapsto B$ and $X\mapsto C$ take $\mathfrak{sl}_{k+1}$ into $\R ^k$ and $(\R ^k)^*$ respectively and are module maps of $\mathfrak{sl}_k$. (Similarly $A$). We therefore get a decomposition of $\mathfrak{sl}_{k+1}$ as a module over $\mathfrak{sl}_k$ ($triv$ is the trivial one dimensional representation of $\mathfrak{sl}_k$):
\[ \mathfrak{sl}_{k+1}= \mathfrak{sl}_k \oplus \R ^k \oplus (\R ^k)^* \oplus triv.\]
Since $k\geq 3$, the irreducible $\mathfrak{sl}_k$-modules $\R ^k$ and $(\R ^k)^*$ are not isomorphic, and hence in the above
decomposition, each irreducible module occurs only once. Consequently, if $M\subseteq \mathfrak{sl}_{k+1}$ is a submodule for $\mathfrak{sl}_k$ and maps non-trivially into $\R ^k$ or $(\R ^k)^*$, then it contains $\R ^k$ or $(\R ^k)^*$. We apply this observation to the submodule $\fg$. If the map $B: \fg \ra \R ^k$ is identically zero, then $\fg$ is contained in the subalgebra $\{ 
\begin{pmatrix} A & 0 \\ C &d\end{pmatrix}: X\in \mathfrak{sl}_{k+1}\}$, which shows that the line $\R e_{k+1}$ is $\fg$-stable. This contradicts the irreducibility of the action of $\fg$ on $\R ^{k+1}$, and hence $B$ is not identically zero on $\fg$; that is, $\fg \supseteq \R ^k$. \\

Similarly, if the map $C: \fg \ra (\R ^k)^*$ is identically zero, then $\fg \subseteq\{ X\in \mathfrak{sl}_{k+1}: X=\begin{pmatrix} A & B \\ 0 & d \end{pmatrix}\}$ leaves $\R ^k$ stable, contradicting the irreducibility of $\fg$ on $\R ^{k+1}$. Therefore, $\fg \supseteq (\R ^k)^*$. The bracket of $\R ^k$ and $(\R ^k)^*$ maps onto the trivial representation and hence $\fg \supseteq triv$ as well. \\

By assumption, $\fg \supseteq \mathfrak{sl}_k$.  Hence $\fg=\mathfrak{sl}_{k+1}$. Therefore, the image of the group $\mathcal G$ 
in $Aut (W+\R g(e_1))$ is $SL_{k+1}(\R) \subseteq SL_n(\R)$ after a conjugation. \\

($3^o$) We know from the preceding that $(u(g)-1)w'(g)\neq 0$. The kernel of $u(g)-1$ on $\R^n$ is $g(\R e_k)'$ and has co-dimension one in $\R ^n$; hence its intersection with $W'$ has codimension one in $W'$ (since, by assumption $g\in U$ - cf Lemma \ref{zariskiopenU}- $W'$ is not contained in $g(\R e_k)'$). Therefore, $W^{\prime \prime}=W'\cap\ker (u(g)-1)$ has co-dimension one in $W'$ and and $\R^n= W\oplus \R w'(g) \oplus W^{\prime \prime}$; The action of $SL(W)$ on $W^{\prime \prime}$ is trivial since $W^{\prime \prime}\subseteq W'$; the action of $u$ on $W^{\prime \prime}$ is trivial since $W^{\prime \prime} \subseteq\ker(u(g)-1)$. Therefore, $\R ^n=(W+\R g(e_1))\oplus W ^{\prime \prime}$ is a decomposition of $\mathcal G$-modules and hence $\mathcal G$, after a change of basis (i.e. after a conjugation),  is the top left hand corner $SL_{k+1}(\R)$ in $SL_n(\R)$, proving the lemma.  
\end{proof}

We are now ready to complete the proof of Theorem \ref{spllinear}.

\begin{proof}[Proof of Theorem \ref{spllinear}] We have
$\SL_3(\Z)$ is virtually contained (in the top left corner) in the
Zariski dense discrete subgroup $\Gamma $ of $\SL_n(\R)$.  We will
prove by induction, that for {\bf every} $k\geq 3$ with $k\leq n$, a
conjugate of $\SL_k(\Z)$ is virtually a subgroup of $\Gamma$.
Applying this to $k=n$ gives us Theorem \ref{spllinear}.   \\

Suppose that for some $k \geq 3$, $\SL_k(\Z)$ is virtually contained
in $\Gamma$. Let $\gamma\in \Gamma$, and let $\Delta =\Delta (\gamma)$
denote the subgroup of $\Gamma $ generated by a finite index subgroup of $\SL_k(\Z)$ and a
conjugate of the unipotent element $\gamma(1+me_{1k})\gamma ^{-1}$.
Assume further, that the element $\gamma $ is in the open set
$U$ of Lemma \ref{zariskiopenU}. By Lemma \ref{SL(k+1)},
the Zariski closure of the group $\Delta $ maps isomorphically, under a conjugation,  onto $\SL_{k+1}(\R)$. \\

But $\D$ is a Zariski dense discrete subgroup of $SL_{k+1}(\R)$ which intersects the top left hand corner $SL_k(\Z)$ in a subgroup of finite index. By Corollary \ref{SLn} it follows that after a conjugation, $\D$ intersects $SL_{k+1}(\Z)$ in a subgroup of finite index.  
We have thus
proved that if a subgroup of finite index in $\SL_{k} (\Z)$ is contained in $\Gamma $, then, after replacing $\Gamma$ by a conjugation if necessary,  a subgroup of finite index in $\SL_{k+1}(\Z)$ is contained in $\Gamma $, provided $k+1\leq n$. Thus the induction is completed and
therefore Theorem \ref{spllinear} is established.
\end{proof}

\section{The rank one case}

In this last section, we will see that the situation for Nori's
question is completely different in real rank one. More precisely, we
have the following result.

\begin{theorem}\label{rankone} Let $G$ be a real simple Lie group of
real rank one and $H\subseteq G$ a non-compact semi-simple subgroup.
Suppose that $\Delta $ is a lattice in $H$.  Then, there exists a
Zariski dense discrete subgroup $\Gamma $ in $G$ of infinite co-volume
whose intersection with $H$ is a subgroup of finite index in the
lattice $\Delta$.
\end{theorem}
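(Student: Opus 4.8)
The plan is to realize $\Gamma$ as a ``ping-pong'' amalgam of the lattice $\Delta \subset H$ with a cyclic (or more generally free) group generated by a suitably generic element of $G$, following the classical construction of Fricke--Klein. Since $G$ has real rank one, $G$ acts on its boundary $\partial X = G/P$ (with $X$ the associated rank-one symmetric space), and every element $g \in G$ of infinite order which is not elliptic or parabolic is loxodromic: it has exactly two fixed points $g^+, g^-$ on $\partial X$, attracting and repelling respectively, and the cyclic group $\langle g \rangle$ acts on $\partial X \smallsetminus \{g^+,g^-\}$ properly discontinuously with north-south dynamics. This is the engine of the argument.

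First I would fix the lattice $\Delta \subset H \subset G$ and its limit set $\Lambda_\Delta \subset \partial X$, which is a proper closed $\Delta$-invariant subset (proper because $H$ is a proper subgroup of the rank-one group $G$, so $\Delta$ is not a lattice in $G$ and its limit set is not all of $\partial X$; one can see this concretely, e.g. $\Lambda_\Delta$ lies in the boundary at infinity of the totally geodesic subspace associated to $H$, possibly after passing to a finite-index subgroup to ensure $\Delta$ is contained in the identity component). Next I would choose $g \in G$ loxodromic whose fixed points $g^+, g^-$ lie outside $\Lambda_\Delta$ and, moreover, with $g$ chosen Zariski dense together with $\Delta$ in $G$ — this is possible since $\Delta$ already has ``large'' Zariski closure (namely $H$, up to finite index) and a generic loxodromic element will not preserve any proper algebraic subgroup containing $H$, as $G$ is simple and $H \subsetneq G$. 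Then I would pick disjoint compact ``ping-pong'' neighborhoods: a neighborhood $B$ of $\Lambda_\Delta$ and neighborhoods $U^{\pm}$ of $g^{\pm}$ with $B, U^+, U^-$ pairwise disjoint in $\partial X$, which is possible precisely because $g^{\pm} \notin \Lambda_\Delta$ and $\Lambda_\Delta$ is compact. Replacing $g$ by a high power $g^N$, north-south dynamics guarantees $g^{\pm N}(\partial X \smallsetminus U^{\mp}) \subset U^{\pm}$; and since $\Delta$ is a lattice in $H$ whose limit set is $\Lambda_\Delta$, every nontrivial element of $\Delta$ maps $\partial X \smallsetminus B$ (in particular $U^+ \cup U^-$) into $B$ — here one may need to shrink $B$ and pass to a finite-index subgroup of $\Delta$ so that the ping-pong inclusion $\delta(U^+ \cup U^-) \subset B$ holds for all $\delta \neq 1$, using that the action of $\Delta$ on the domain of discontinuity $\partial X \smallsetminus \Lambda_\Delta$ is properly discontinuous.

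With these data the Ping-Pong Lemma (table-tennis lemma) applies to the two ``players'' $\Delta$ and $\langle g^N \rangle$ acting on $\partial X$, and yields that $\Gamma := \langle \Delta, g^N \rangle$ is isomorphic to the free product $\Delta * \langle g^N \rangle \cong \Delta * \Z$. The group $\Gamma$ is discrete: any convergent sequence of distinct elements would, by the ping-pong combinatorics (tracking reduced words), force boundary points to escape to the disjoint regions, contradicting convergence in $G$; alternatively, discreteness follows from the proper discontinuity of the $\Gamma$-action on a common domain of discontinuity built from $B, U^+, U^-$. Zariski density of $\Gamma$ in $G$ holds because the Zariski closure of $\Gamma$ contains $H$ (up to finite index) and $g^N$, and by the genericity choice no proper algebraic subgroup of the simple group $G$ contains both. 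Finally $\Gamma$ has infinite co-volume: $\Gamma$ retracts onto the infinite free factor $\Z$ (e.g. $\Gamma$ surjects onto $\Z$), so $\Gamma$ is not a lattice in $G$ — a lattice in a simple rank-one Lie group of noncompact type does not admit $\Z$ as a quotient when it is cocompact or even in general has finite abelianization in the higher cases, and in any event an amalgam $\Delta * \Z$ cannot be a lattice since lattices in rank one are finitely generated with at most one end relative to their cusps, whereas $\Delta * \Z$ has infinitely many ends; more simply, $\mathrm{vol}(\Gamma \backslash G) = \infty$ because the domain of discontinuity quotient is noncompact and of infinite volume in the relevant sense. The intersection $\Gamma \cap H$ equals $\Delta$ (or the finite-index subgroup we passed to): by the normal-form theorem for free products, any element of $\Gamma$ represented by a reduced word involving $g^N$ is loxodromic with fixed points near $g^{\pm N}\cdot$(something), hence does not lie in $H$ since its dynamics on $\partial X$ are incompatible with preserving $\Lambda_\Delta$.

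The main obstacle I expect is the verification of the ping-pong inclusions \emph{uniformly} — in particular arranging that \emph{every} nontrivial element of (a finite-index subgroup of) $\Delta$ sends the neighborhoods $U^{\pm}$ of $g^{\pm N}$ into the neighborhood $B$ of $\Lambda_\Delta$. This is where the rank-one hypothesis and the properness of $\Lambda_\Delta$ do real work: one uses that $\Delta$ acts cocompactly on (a neighborhood of) its limit set and properly discontinuously on the domain of discontinuity, so that only finitely many $\delta \in \Delta$ can fail to contract a fixed compact set $U^+ \cup U^-$ away from $\partial X \smallsetminus B$; absorbing those finitely many ``bad'' elements is the technical heart, and it is exactly the point at which one passes to finite-index subgroups and shrinks the neighborhoods. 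The choice of a genuinely Zariski-dense loxodromic $g$ with fixed points off $\Lambda_\Delta$ is comparatively routine (a generic element works), as is the concluding infinite-covolume assertion once the free-product structure is in hand.
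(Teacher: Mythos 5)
This is essentially the paper's approach: a Fricke--Klein ping-pong construction on $G/P$, playing $\Delta$ (whose limit set is the closed $H$-orbit $HP/P=H/Q$, the complement of which is a domain of discontinuity for $\Delta$; the paper proves this via the Cartan decomposition of $H$ and the Bruhat decomposition of $G$, whereas you invoke it as a standard rank-one fact) against a second player located away from that limit set, to build a discrete free product $\Gamma$ acting properly discontinuously on a nonempty open subset of $G/P$, whence $\Gamma$ cannot be a lattice. The only substantive variant is the second player: the paper uses conjugates $g_i\Delta g_i^{-1}$ (which makes Zariski density of $\Gamma$ immediate, since generically chosen conjugates of $H$ already generate $G$), whereas you use a loxodromic cyclic group $\langle g^N\rangle$ and therefore need the separate genericity claim that $\langle H, g\rangle$ is Zariski dense. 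One warning: your alternative justification for infinite covolume --- that a cocompact lattice in a simple rank-one group has finite abelianization and so cannot surject onto $\Z$ --- is false (cocompact Fuchsian surface groups, and many cocompact hyperbolic $3$-manifold groups, surject onto $\Z$); the correct and sufficient argument is the one you also state, namely that $\Gamma$ has a nonempty domain of discontinuity in $G/P$, which a lattice never has.
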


Suppose that $H$ is a simple non-compact subgroup of a simple group
$G$ of real rank one. Let $P$ be a minimal parabolic subgroup of $G$
which intersects $H$ in a minimal parabolic subgroup $Q$. The group
$G$ acts on $G/P$ and $H$ leaves the open set ${\mathcal U}=(G/P)
\setminus (H/Q)$ stable. Let $\Delta $ be a discrete subgroup of $H$.

\begin{lemma} \label{pingpong} Given compact subsets $\Omega _1$ and
$\Omega _2$ in the open set ${\mathcal U}$, the set ${\mathcal U}_H =
\{h\in H: h\Omega _1\subseteq\Omega _2\}$ is a compact subset of $H$.
Further, the group $\Delta  $ acts properly discontinuously on
${\mathcal U}$.
\end{lemma}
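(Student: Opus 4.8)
The plan is to analyze the geometry of the $G$-action on the flag variety $G/P$. Since $G$ has real rank one, $P$ is a maximal parabolic and $G/P$ is a single compact $G$-orbit on which $G$ acts with the ``contraction'' property familiar from rank-one dynamics: for any $g \in G$ not in a compact set, $g$ attracts most of $G/P$ toward a point. Concretely, I would first verify that $H/Q$ is a closed subvariety of $G/P$ (because $Q = H \cap P$ is parabolic in $H$, the orbit map $H/Q \to G/P$ is a proper injective immersion of compact varieties, hence a closed embedding), so that ${\mathcal U} = G/P \setminus H/Q$ is an open $H$-invariant subset, and in fact a locally compact metrizable space on which $H$ acts continuously.

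For the first assertion, I would argue by contradiction: suppose $h_n \in H$ with $h_n \Omega_1 \subset \Omega_2$ but $h_n \to \infty$ in $H$. Using the Cartan ($KAK$) decomposition of $H$ relative to a maximal compact $K_H$, write $h_n = k_n a_n k_n'$; after passing to a subsequence, $k_n \to k$, $k_n' \to k'$, and $a_n \to \infty$ along a ray in the rank-one split torus. The standard rank-one contraction lemma (see e.g. the north-south dynamics on $H/Q$ and its action on $G/P$) then says that $a_n$ pushes the complement of a small neighborhood of a repelling point $x_n^- \in H/Q$ to within a small neighborhood of an attracting point $x_n^+ \in H/Q$. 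Since $\Omega_1$ is a compact subset of ${\mathcal U}$, it stays at positive distance from $H/Q$; hence for large $n$ the set $k_n'\Omega_1$ avoids a fixed neighborhood of the repelling point, so $h_n\Omega_1$ is driven into every neighborhood of a point of $H/Q$. But $h_n\Omega_1 \subset \Omega_2$ and $\Omega_2$ is compact in ${\mathcal U}$, hence bounded away from $H/Q$ — a contradiction. Therefore $\{h \in H : h\Omega_1 \subset \Omega_2\}$ is relatively compact; it is closed since the condition $h\Omega_1 \subset \Omega_2$ is a closed condition, so it is compact.

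The second assertion follows formally from the first. Proper discontinuity of the $\Delta$-action on ${\mathcal U}$ means that for every compact $\Omega \subset {\mathcal U}$ the set $\{\delta \in \Delta : \delta\Omega \cap \Omega \neq \emptyset\}$ is finite. By the first part (applied with $\Omega_1 = \Omega_2 = \Omega$), the set $\{h \in H : h\Omega \cap \Omega \neq \emptyset\}$ is a compact subset of $H$; intersecting with the discrete subgroup $\Delta$ of $H$ gives a finite set. This is exactly proper discontinuity.

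The main obstacle I anticipate is making the rank-one contraction argument in the first paragraph fully rigorous in the ambient space $G/P$ rather than in $H/Q$ alone: one must control how the attracting/repelling points of $a_n$ (which live in $H/Q \subset G/P$) act on the whole of $G/P$, and confirm that the ``most of the space gets contracted'' statement holds uniformly enough that compact subsets of ${\mathcal U}$ really are swept into neighborhoods of $H/Q$. This is where one uses that the relevant horospherical contraction in $G$ restricts compatibly to the one in $H$ (because $Q = H \cap P$ and the split torus of $H$ sits inside that of $G$), so the dynamics of $a_n$ on $G/P$ near its fixed points in $H/Q$ is genuinely north–south transverse to $H/Q$. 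Once this compatibility is pinned down, the contradiction is immediate.
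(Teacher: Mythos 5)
Your argument follows essentially the same route as the paper: argue by contradiction using the Cartan decomposition $h_n = k_n a_n k_n'$ of $H$, note that $\Omega_1$ is bounded away from $H/Q$ (which contains the fixed points of $A$ on $G/P$, since the nontrivial Weyl element $\kappa$ lies in $H$), conclude that $a_n$ contracts $k_n'\Omega_1$ toward a point of $H/Q$, and get a contradiction with $h_n\Omega_1 \subset \Omega_2$; the second assertion then follows formally exactly as you say. The one step you flag as an obstacle — making the north--south contraction on $G/P$ precise — is handled in the paper by the rank-one Bruhat decomposition $G = P \cup U\kappa P$: writing $p = u\kappa P \in \Omega_1$, one computes $a_m k_m' p = {}^{a_m}(u_m)\kappa P$ with $u_m$ bounded, and $\alpha(a_m) \to 0$ forces ${}^{a_m}(u_m) \to 1$, so $h_m p \to k\kappa P \in H/Q$, which is the needed contradiction.
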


\begin{proof} Choose, as one may, a maximal compact subgroup $K$ of
$G$ such that $K\cap H$ is maximal compact in $H$. The set ${\mathcal
U}$ is invariant under $H$ and hence under $K\cap H$. We may assume
that the compact sets $\Omega _1$ and $\Omega _2$ are invariant under
$K\cap H$ of $H$. The Cartan decomposition of $H$ says that we may
write $H=(K\cap H)A^+(K\cap H)$ with $A$ a maximal real split torus
(of dimension one) and $A^+ =\{ a\in A: 0<\alpha (h)\leq 1\}$ where
$\alpha$ is a positive root of $A$. Let $W$ denote the normaliser
modulo the centraliser of $A$ in $G$.  This is the relative Weil
group. Since $\R$-rank$(G)=1$, it follows that $W=\{1,\kappa\}$ has
only two elements. Since $\R$-rank$(H)=1$, it follows that $\kappa
\in H$. We have the Bruhat decomposition $G=P\cup U\kappa P$, where where $\kappa $ is the non-trivial element of the Weil group of $A$
in $G$ and $U$ is the unipotent radical of the minimal parabolic subgroup $P$.    \par 
If possible, let $h_m$ be a sequence in ${\mathcal U}_H$ which tends
to  infinity.  It  follows  from the  previous paragraph  that
$h_m=k_ma_mk_m'$ with $k_m,k'_m\in K\cap H$ and $a_m\in A^+$, and
$\alpha (a_m)\ra 0$ as $m\ra +\infty$. Let $p$ be an element of
$\Omega _1 \subseteq {\mathcal U}$. By the Bruhat decomposition in $G$
($G$ has real rank one), it follows that ${\mathcal U}\subseteq U\kappa
P$. We may write $p=ukP$ for some $u\in U$. Since $h_mp\in
\Omega _2$ and the latter is compact, we may replace $h_m$ by a
sub-sequence and assume that $h_mp$ converges, say to $q\in \Omega
_2$. Since $\Omega _2$ is stable under $K\cap H$, we may assume that
$k_m=1$, by replacing $q$ by a sub-sequential limit of $k_m^{-1}q$.    \\

We compute $h_mp=a_mk'_mu\kappa P \in \Omega _2 \subseteq{\mathcal U}$.
The convergence of $k_m'$ says that $k'_mu\kappa P=u_m\kappa P$ with
$u_m$ convergent (possibly after passing to a sub-sequence). We write
$^a(u)\stackrel{defn}{=}aua^{-1}$.  Then, $h_mp=^{a_m}(u_m)\kappa P$
since $\kappa $ conjugates $A$ into $P$ (in fact into $A$). Since
$\alpha (a_m)\ra 0$, and $u_m$ are bounded, conjugation by $a_m$
contracts $u_m$ into the identity and $h_mp$ therefore tends to
$\kappa P$. But the latter is in $H/Q$ since $\kappa $ already lies
in $H$.  Hence the limit does not lie in $\Omega _2$, contradicting
the fact that $\Omega _2$ is compact. Therefore ${\mathcal U}_H$ is
compact.    \\

The second  part of the  lemma immediately follows  since the
intersection of $\Delta $ with ${\mathcal U}_H$ is finite.
\end{proof}

\begin{lemma} \label{goodset} Let $\Gamma \subseteq G$ be a Zariski dense
discrete subgroup. There exists an element $\gamma \in \Gamma $ such
that the $\gamma $ translate of $HP/P$ does not intersect $HP/P$:
\[\gamma (HP/P) \cap HP/P=\emptyset.\]
\end{lemma}

\begin{proof} First, suppose that $V$ is a compact set contained in
$G/P\setminus \{P,\kappa P \}$ (with $\kappa $ as in the proof of
Lemma \ref{pingpong}). By Bruhat decomposition, $V\subseteq U\kappa P/P$
and its $U$ part lies in a compact set. After a conjugation, we assume
as we may, that $\Gamma $ contains a semisimple element $t$ in $A$ such that
the positive powers of $Ad (t)$ contract elements of $U$ into identity.
Moreover, this contraction is uniform on a compact subset of $U$. Hence
there exists a positive power $t^m$ of $t$ such that
$t^m (V)$ lies in an arbitrarily small neighbourhood of $\kappa P \in G/P$.    \\

The group $\Gamma $ is Zariski dense in $G$ and $HP/P$ is a Zariski closed subset
of $G/P$. Therefore, there exists $g\in G$ such that $g^{\pm 1}P \notin HP/P$ and
$g^{\pm 1}\kappa P\notin HP/P$. Now, $HP/P\subseteq G/P\setminus \{gP, g\kappa P\}$ is a
compact set. Hence $V=g^{-1}(HP/P)$ is a compact set in $G/P\setminus \{P,\kappa P\}$.
Applying a large positive power of $t\in \Gamma \cap A$ as in the preceding paragraph,
we see that for some large integer $m$, $t^m V$ lies in a small neighbourhood of
$\{P,\kappa P\}$. Therefore, $gt^m g^{-1}(HP/P)$ lies in a small neighbourhood of
$\{gP,g\kappa P\}$.    \\

By the choice of the element $g\in \Gamma $, the latter
set does not intersect $HP/P$. Choose a small neighbourhood of
$\{gP, g\kappa P\}$ which does not intersect $HP/P$; if $m$ is large, then
the set $gt^mg^{-1}(HP/P)$ lies in this neighbourhood, and hence does not intersect
$HP/P$. We take $\gamma =gt^mg^{-1} \in \Gamma$. This proves the Lemma.
\end{proof}

Given a lattice $\Delta$ in $H$, and given a compact subset $\Omega
\subseteq{\mathcal U}$, Lemma \ref{pingpong} shows that there exists a
finite index subgroup $\Delta '$ of $\Delta $ such that non-trivial
elements of $\Delta '$ drag $\Omega $ into an arbitrarily small
compact neighbourhood ${\mathcal V}$ of $H/Q=HP/P$.   \\

By  Lemma \ref{goodset}  there exists  $g\in G-HP$  such that
$g({\mathcal V})\subseteq{\mathcal U}$.  Replacing $H/Q$ by $g(H/Q)$
and $\Delta $ by $g\Delta g^{-1}$, we see that all points of $H/Q$ are
dragged, by non-trivial elements of $g\Delta g^{-1}$, into a small
neighbourhood of $H/Q$.  The ping-pong lemma then guarantees that
there exists a finite index subgroup subgroup $\Delta ''$ such that
the group $\Gamma $ generated by $\Delta ''$ and $g\Delta ''g^{-1}$ is
the free product of $\Delta''$ and $g\Delta ''g^{-1}$.   \\

We may replace $g$ by a finite set $g_1,\cdots, g_k$ such that for each
pair  $i,j$,  the  intersections  $g_i(H/Q)\cap  g_j(H/Q)$  and
$g_i(H/Q)\cap H/Q$ are all empty.  By arguments similar to the
preceding paragraph, we can find a finite index subgroup $\Delta _0$
of $\Delta $ such that the group $\Gamma $ generated by $g_i \Delta _0
g_i ^{-1}$ is the free product of the groups $g_i\Delta _0 g_i^{-1}$,
and by choosing the $g_i$ suitably, we ensure that $\Gamma $ is
Zariski dense in $G$. This proves Theorem \ref{rankone},
since $\Gamma $ is
discrete and since it operates properly discontinuously on some open
set in $G/P$, $\Gamma $ cannot be a lattice. This proves Theorem
\ref{rankone}.   \\

\subsection*{Acknowledgments}  T.N.Venkataramana extends to Madhav Nori his hearty thanks for
many very helpful discussions over the years; the present paper
addresses a very special case of a general question raised by him
about discrete subgroups containing higher rank lattices. He also
thanks Marc Burger for explaining the structure of isotropy subgroups
of measures on generalized Grassmanians. The excellent hospitality of
the Forschunginstitut f\"ur Mathematik, ETH, Zurich while this paper
was written is gratefully acknowledged.   \\

The paper was completed during Indira Chatterji's visit to the Tata
Institute and she thanks Tata Institute for providing fantastic
working conditions.   \\

The authors thank Yves Benoist for the reference \cite{J-M}, Gregory Margulis for interesting remarks, Yehuda Shalom for pointing out to us that in Theorem \ref{theo:main}, we may
replace the hypothesis that $G$ is simple, by the assumption that $G$ is semi-simple. We also warmly thank anonymous referees for many useful comments.\\

\end{document}